\newtheorem{theorem}{Theorem}[section]
\newtheorem{lemma}[theorem]{Lemma}
\newtheorem{proposition}[theorem]{Proposition}
\newtheorem{corollary}[theorem]{Corollary}
\theoremstyle{remark}
\newtheorem{remark}[theorem]{Remark}
\theoremstyle{definition}
\theoremstyle{remark}
\numberwithin{equation}{section}
\newcommand{\summ}{\mathop{\sum\sum}}
\newcommand\numberthis{\stepcounter{equation}\tag{\theequation}}
\newcommand{\R}{\mathbb{R}}
\newcommand{\ee}{\varepsilon}
\newcommand{\1}{\mathbf{1}}
\newcommand{\dd}{{\rm d}}
\newcommand{\vphi}{{\varphi}}
\newcommand{\cu}{{\, \mathfrak u}}
\newcommand{\ssum}[1]{\sum_{\substack{#1}}}
\newcommand{\cond}{{\rm cond}}
\newcommand{\prim}{\text{ primitive}}
\begin{document}

\title[Smooth-supported multiplicative functions in APs]{Smooth-supported multiplicative functions in arithmetic progressions beyond the $x^{1/2}$-barrier}

\author[S. Drappeau]{Sary Drappeau}
\address{SD: Aix Marseille Universit\'e, CNRS, Centrale Marseille \\ I2M UMR 7373 \\ 13453 Marseille \\ France}

\email{sary-aurelien.drappeau@univ-amu.fr}

\author[A. Granville]{Andrew Granville}
\address{AG: D\'epartement de math\'ematiques et de statistique\\
Universit\'e de Montr\'eal\\
CP 6128 succ. Centre-Ville\\
Montr\'eal, QC H3C 3J7\\
Canada; 
and Department of Mathematics \\
University College London \\
Gower Street \\
London WC1E 6BT \\
England.
}
\thanks{A.G.~has received funding from the
European Research Council  grant agreement n$^{\text{o}}$ 670239, and from NSERC Canada under the CRC program.}
\email{{\tt andrew@dms.umontreal.ca}}

\author[X. Shao]{Xuancheng Shao }
\address{XS: Mathematical Institute\\ Radcliffe Observatory Quarter\\ Woodstock Road\\ Oxford OX2 6GG \\ United Kingdom}
\email{Xuancheng.Shao@maths.ox.ac.uk}
\thanks{X.S.~was supported by a Glasstone Research Fellowship.}

\thanks{We would like to thank Adam Harper for a valuable discussion}

\subjclass[2010]{11N56}
\keywords{Multiplicative functions, Bombieri-Vinogradov theorem, Siegel zeroes}

\date{\today}

\begin{abstract}  
We show that smooth-supported multiplicative functions $f$ are well-distributed in arithmetic progressions $a_1a_2^{-1} \pmod q$ on average over moduli $q\leq x^{3/5-\varepsilon}$ with $(q,a_1a_2)=1$.
\end{abstract}

\maketitle

{\centering \emph{In memory of Klaus Roth}\par}
\medskip

\section{Introduction}

In this paper we prove a Bombieri-Vinogradov type theorem for general multiplicative functions supported on smooth numbers, with a fixed member of the residue class. Given a multiplicative function $f$, we define, whenever $(a,q)=1$,
 $$
\Delta(f,x;q,a):=\sum_{\substack{n\leq x \\ n\equiv a \pmod q}} f(n)  -  \frac 1{\varphi(q)} \sum_{\substack{n\leq x \\ (n,q)=1}} f(n).
$$
We wish to prove that, for an arbitrary fixed $A>0$, 
\begin{equation}\label{eq:BVI}
\sum_{\substack{ q\sim Q \\ (a,q)=1}}  | \Delta(f,x;q,a) | \ll \frac x {(\log x)^A}
\end{equation}
where, here and henceforth, ``$q\sim Q$'' denotes the set of integers $q$ in the range $Q<q\leq 2Q$, for as large values of $Q$ as possible.  Let
\[
F(s) = \sum_{n=1}^{\infty} \frac{f(n)}{n^{s}}\ \ \text{and} \ \
- \frac{F'(s)}{F(s)} = \sum_{n=2}^{\infty}   \frac{\Lambda_f(n)}{n^{s}},  
\]
for Re$(s)>1$.  Following \cite{GHS}, we restrict  attention to the class ${\mathcal C} $ of multiplicative functions $f$ for which
\[
 |\Lambda_f(n)|\leq   \Lambda(n) \ \ \textrm{for all} \ \ n\geq 1.
 \]
This includes  most $1$-bounded multiplicative functions of interest, including all $1$-bounded completely multiplicative functions. Two key observations are that if $f\in \mathcal C$ then each $|f(n)|\leq 1$, and if $f\in\mathcal C$ and $F(s)G(s)=1$ then $g\in \mathcal C$.

In \cite{GSh} the last two authors showed that there are two different reasons that the sum in \eqref{eq:BVI}
might be $\gg x/\log x$. First $f$ might be a character of small conductor (for example  $f(n) = (n/3)$), or might ``correlate'' with such a character; secondly $f$ might have been selected so that $f(p)$ works against us for most primes $p$ in the range $x/2<p\leq x$. We handled these potential pretentious problems as follows.

To avoid issues with the values $f(p)$ at the large primes $p$ we only allow $f$ to be supported on $y$-smooth integers\footnote{That is, integers all of whose prime factors are $\leq y$.} for $y=x^\theta$, for some small $\theta>0$. 

To avoid issues with the function $f$ correlating with a given character $\chi$, note that this happens when
\[
 S_f(X,\chi):= \sum_{n\leq X} f(n) \overline{\chi}(n) 
\]
 is ``large'' (that is, $\gg X$, or $\gg X/(\log X)^A$) for some $X$ in the range $x^{1/2}<X\leq x$, in which case  \eqref{eq:BVI} might well be false. We can either assume that this is false for all $\chi$ (which is equivalent to what is known as a ``Siegel-Walfisz criterion'' in the literature), or we can take account of such $\chi$ in the ``Expected Main Term''. We will begin by doing the latter, and then deduce the former as a corollary.

 We start by stating the Siegel-Walfisz criterion:
\medskip

\noindent \emph{The  Siegel-Walfisz criterion}: For any fixed $A > 0$, we say that $f$ satisfies the $A$-Siegel-Walfisz criterion if for any $(a,q)=1$ and any $x \geq 2$ we have the bound
\[
|\Delta(f,x;q,a)| \ll_A\frac 1 {(\log x)^A} \sum_{n \leq x} |f(n)|.
\]
We say that $f$ satisfies the Siegel-Walfisz criterion if it satisfies the $A$-Siegel-Walfisz criterion for all $A > 0$.
\medskip

For a set of primitive characters $\Xi$, let $\Xi_q$ be the  set  of those characters $\pmod q$ which are induced by the characters in $\Xi$. Then denote
\[
\Delta_\Xi(f,x;q,a) :=  \sum_{\substack{n\leq x \\ n\equiv a \pmod q}} f(n) -   \frac 1{\varphi(q)} \sum_{ \substack{  \chi  \in   \Xi_q}} \chi(a) S_f(x,\chi)
\]

In \cite{GSh} we proved the following result:

\begin{theorem} \label{MathResult3+} 
 Fix $\delta, B>0$. Let $y = x^{\varepsilon}$ for some $\varepsilon > 0$ sufficiently small in terms of $\delta$. Let $f\in \mathcal C$ be a   multiplicative function which is only supported on $y$-smooth integers. Then there exists a set, $\Xi$, of primitive characters, containing $\ll   (\log x)^{6B+7+ o(1)}$ elements, such that for any $1 \leq |a| \ll Q \leq x^{\frac{20}{39}-\delta} $, we have
 \begin{equation*}
 \sum_{\substack{q \sim Q \\ (a,q) = 1}} \left|  \Delta_{\Xi}(f,x;q,a)  \right|  \ll \frac x{(\log x)^B}.
 \end{equation*}
Moreover, if $f$ satisfies the   Siegel-Walfisz criterion then   
\[ \sum_{\substack{q \sim Q \\ (a,q)=1}}   \left|  \Delta(f,x;q,a)  \right|    \ll \frac x{(\log x)^B}. \]
\end{theorem}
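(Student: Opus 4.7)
The overall strategy is to reduce the weighted inequality for $\Delta_{\Xi}(f,x;q,a)$ to a family of bilinear sum estimates via a combinatorial identity, and then to push past the $x^{1/2}$ barrier by exploiting the $y$-smoothness of $f$ together with nontrivial cancellation among Kloosterman fractions.  Using a Heath--Brown-type identity, I would decompose $f$ as a short sum of convolutions $\alpha*\beta$ with $\alpha$ and $\beta$ supported in controlled dyadic ranges.  Since $f \in \mathcal{C}$ is supported on $y$-smooth integers with $y = x^\varepsilon$, every $n \leq x$ in the support admits factorisations $n = a b$ with $a$ lying in essentially any prescribed dyadic interval $[T, 2T]$ (for $T \le x/y$); this ``smooth factorisation lemma'' gives full flexibility in selecting the splitting lengths and deciding whether to treat a given piece as Type~I or Type~II.

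The role of $\Xi$ becomes transparent after expanding into Dirichlet characters: the expected main term in $\Delta_{\Xi}$ is precisely the projection of $\sum_{n \equiv a\,(q)} f(n)$ onto characters in $\Xi_q$, so by orthogonality I am left to bound
\[
\sum_{q \sim Q} \Big| \frac{1}{\varphi(q)} \sum_{\substack{\chi \bmod q \\ \chi \notin \Xi_q}} \bar\chi(a)\, S_f(x,\chi) \Big|
\]
together with analogous twisted bilinear sums arising from the convolution decomposition, with the benefit that every surviving primitive character satisfies $|S_f(X,\chi)| \ll_A X(\log X)^{-A}$ on the relevant dyadic intervals $X$ by the very construction of $\Xi$.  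For moduli $Q \le x^{1/2-\delta}$ the classical large-sieve inequality for Dirichlet characters handles both the Type~I and Type~II pieces in standard fashion.  To reach $Q \le x^{20/39-\delta}$, I would invoke a Deshouillers--Iwaniec / Bombieri--Friedlander--Iwaniec dispersion estimate for Kloosterman fractions: open the congruence $n \equiv a \pmod q$ by Cauchy--Schwarz and Poisson summation in the long variable, reduce to averages of Kloosterman sums controlled via Kuznetsov's formula, and supply the additional averaging variable (required to beat $x^{1/2}$) by a second application of the smooth factorisation lemma.

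The main obstacle is the precise calibration of the bilinear decomposition and the Type~I / Type~II lengths required to reach the exponent $20/39$: one must balance the gain from the Kloosterman-sum bounds, which favour extreme factorisations, against the $q$-uniformity loss in the large sieve for Type~II sums with both factors close to $x^{1/2}$.  A further delicate point is tracking the pretentious contributions throughout the dispersion argument, so that $\Xi$ can still be taken of size $\ll (\log x)^{6B+7+o(1)}$; this requires an iterative ``pretentious repulsion'' argument ensuring that only the bounded-conductor, small-$S_f(X,\chi)/X$ characters may conspire against cancellation.  The Siegel--Walfisz corollary is then formal: under that hypothesis, each $\chi\in\Xi$ contributes only $\ll x/(\log x)^B$ to the $\Xi$-main term, which can therefore be absorbed into the error term on the right-hand side.
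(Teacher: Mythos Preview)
This theorem is not proved in the present paper: it is quoted as a result of the earlier work~\cite{GSh} (note the sentence ``In~\cite{GSh} we proved the following result'' immediately preceding the statement). There is therefore no proof in this paper to compare your proposal against.

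For what it is worth, your outline is broadly consonant with how the present paper establishes the stronger Theorem~\ref{MainTheorem} (exponent $3/5$): flexible factorisation of smooth numbers into a triple convolution, a dispersion argument with Kloosterman-sum input (here imported from~\cite{Dr1}), and a separate mechanism to control $|\Xi|$. Two places where your sketch departs from what the paper actually does for its own results: (i) you open with a ``Heath--Brown-type identity'', but $f$ is an arbitrary function in $\mathcal C$ supported on smooths, not a variant of $\Lambda$, so no such combinatorial identity is available --- the bilinear structure comes purely from the smooth-factorisation lemma you describe a line later; (ii) you attribute the bound on $|\Xi|$ to ``iterative pretentious repulsion'', whereas the paper's analogue (Corollary~\ref{cor:exceptional-good-bound}) is obtained from a large-sieve inequality for smooth-supported sequences together with zero-density estimates. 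Repulsion arguments typically yield $O(1)$ exceptional characters, not $(\log x)^{O(1)}$, and are not the mechanism at work here.
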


 In this article we develop Theorem \ref{MathResult3+} further, 
 allowing $Q$ as well as $y$ to vary over a much wider range, and obtaining upper bounds in terms of (the more appropriate) $\Psi(x,y)$, the number of $y$-smooth integers up to $x$.

\begin{theorem}   \label{MainTheorem} Fix $\varepsilon, A>0$. Suppose that $f\in \mathcal C$, and is only supported on $y$-smooth numbers, where
 \begin{equation}\label{y-range}
x^\delta> y\geq \exp\left(   \frac 52 \cdot \frac{ \sqrt{ \log x  }  \log\log x} { \sqrt{   \log\log\log x} } \right)
 \end{equation}
for some sufficiently small $\delta > 0$. Then there exists a set, $\Xi$, of primitive characters, containing $\ll (\log x)^{6A+38}$ elements, such that if  $1\leq |a_1|, |a_2| \leq x^\delta$ then
\[
   \sum_{\substack{q\leq x^{3/5-\varepsilon} \\ (q,a_1a_2)=1}}   \left|  \Delta_\Xi(f,x;q,a_1\overline{a_2})  \right|
    \ll   \frac{\Psi(x,y)}{(\log x)^A}  .
\]
 Moreover, if $f$ satisfies the   Siegel-Walfisz criterion then   
\[
   \sum_{\substack{q\leq x^{3/5-\varepsilon} \\ (q,a_1a_2)=1}}   \left|  \Delta(f,x;q,a_1\overline{a_2})  \right|
    \ll   \frac{\Psi(x,y)}{(\log x)^A}  .
\]
 \end{theorem}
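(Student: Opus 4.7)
The plan is to push the level of distribution in Theorem~\ref{MathResult3+} from $x^{20/39}$ up to $x^{3/5}$ by feeding estimates of Bombieri--Friedlander--Iwaniec / Deshouillers--Iwaniec type for averages of incomplete Kloosterman sums into the Granville--Shao framework of~\cite{GSh}. The appearance of the residue class $a_1 \overline{a_2} \pmod q$ rather than a single residue is decisive: after dispersion, the shift $\overline{a_2}$ in the denominator is precisely what produces the Kloosterman-fraction averages to which the Kuznetsov formula yields non-trivial bounds, and it is this structure that governs the exponent $3/5$.

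I would begin with a combinatorial decomposition of $f$. Applying a Heath--Brown-type identity to $\Lambda_f$ (which satisfies $|\Lambda_f| \leq \Lambda$) and dyadically splitting gives
\[
f(n) \mathbf{1}_{n \leq x} \;=\; \sum_{j} (\alpha_j * \beta_j)(n) \mathbf{1}_{n \leq x}
\]
in $O((\log x)^{O(1)})$ pieces, where $\alpha_j, \beta_j$ are supported on dyadic intervals of lengths $M_j, N_j$ with $M_j N_j \asymp x$. Because $f$ is supported on $y$-smooth integers, every $n$ in its support admits a divisor in any prescribed dyadic sub-interval of $[1, y]$; this freedom lets us redistribute factors between $\alpha_j$ and $\beta_j$ so as to force $(M_j, N_j)$ into a favourable regime. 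The lower bound on $y$ in~\eqref{y-range} is the precise threshold at which this redistribution remains quantitatively effective, and it is also what lets one phrase the final bound in terms of $\Psi(x,y)$ rather than $x$.

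Each resulting bilinear piece is then estimated according to its shape. Pieces in which one of $M_j, N_j$ is very small (``Type~I'') are handled by a large sieve estimate, with the few exceptional small-conductor characters that could obstruct the bound absorbed into the set $\Xi$ as in~\cite{GSh}; this is where $|\Xi| \ll (\log x)^{6A+38}$ enters. Pieces in which $M_j$ and $N_j$ are both of intermediate size (``Type~II'') are treated by the dispersion method of Linnik applied to $\Delta_\Xi$: one would Cauchy--Schwarz out $\alpha_j$, square out $\beta_j$, exchange summations over $q$ and the squared variables, and Poisson-summate the remaining inner variable modulo a product $qr$ with an auxiliary modulus $r$. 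The off-diagonal contribution reduces to an average of incomplete Kloosterman sums of the shape $S(m_1, m_2 \overline{a_2} a_1; q)$, twisted by smooth weights from the Poisson kernel, to which one applies the Deshouillers--Iwaniec estimate in its well-factorable form suited to smooth support, as developed by the first author for smooth numbers in arithmetic progressions. The main obstacle is the simultaneous calibration of the sizes $(M_j, N_j)$, the smoothness parameter $y$ through~\eqref{y-range}, the auxiliary modulus $r$, and the cutoffs defining $\Xi$, so that the Kloosterman saving dominates all the losses from Cauchy--Schwarz, dispersion, and the characters in $\Xi$, uniformly for $Q \leq x^{3/5-\varepsilon}$. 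Once this $\Delta_\Xi$ bound is secured, the Siegel--Walfisz version follows formally: under that hypothesis each $\chi \in \Xi$ contributes $O(\Psi(x,y)/(\log x)^A)$ to $\sum_q |\Delta_\Xi - \Delta|$, and polylogarithmic $|\Xi|$ absorbs the total cost.
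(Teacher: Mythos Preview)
Your high-level intuition---that the exponent $3/5$ comes from dispersion plus Deshouillers--Iwaniec--type Kloosterman averages, and that the $a_1\overline{a_2}$ structure is what makes this possible---is correct, and this is indeed inherited from~\cite{Dr1}. But both the architecture and several specific claims diverge from the paper, and at least two points are genuine gaps rather than alternative choices.

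\textbf{Architecture.} The paper does \emph{not} decompose $f$ via a Heath--Brown identity on $\Lambda_f$ and then sort pieces into Type~I/II. Instead it proceeds in two stages. First (Theorem~\ref{SaryResult}), it proves the estimate with a \emph{large} exceptional set $\mathcal A(D)$ consisting of \emph{all} primitive characters of conductor $\le D$; this step quotes the triconvolution machinery of~\cite{Dr1} almost verbatim, the combinatorial input being the direct factorisation of a $y$-smooth $n$ into three pieces of prescribed sizes (no Heath--Brown needed). Second (Proposition~\ref{Moduli Reduction} and Section~4), it passes from $\mathcal A(D)$ to a small $\Xi$ via a M\"obius--type inversion identity (Lemma~\ref{Lemm X and A}) together with Proposition~\ref{Errorfaps1} from~\cite{GSh}, which controls $\Delta_\Xi$ for moduli up to $D$.

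\textbf{The bound on $|\Xi|$.} You write that $|\Xi|\ll(\log x)^{6A+38}$ comes from a large-sieve argument ``as in~\cite{GSh}''. This is the main gap. In the present setting $y$ may be as small as $\exp((\log x)^{1/2+o(1)})$, so $\Psi(x,y)$ is far smaller than $x$ and the classical large sieve $\sum_{q\le Q}\sum_\chi^* |\sum a_n\chi(n)|^2 \ll (x+Q^2)\sum|a_n|^2$ gives nothing useful: the right side is of order $x$, not $\Psi(x,y)$. The paper's key new ingredient is a large sieve inequality \emph{supported on smooth numbers} (Theorem~\ref{thm:large-sieve-smooth}), proved via zero-density estimates, which replaces $x$ by $\Psi(x,y)$ on the right. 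Without this, you cannot bound $|\Xi|$ in the regime~\eqref{y-range}.

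\textbf{The source of the $y$-restriction.} You attribute the lower bound in~\eqref{y-range} to the combinatorial redistribution of factors. This is incorrect: the dispersion/Kloosterman step (Theorem~\ref{SaryResult}) works down to $y\ge(\log x)^C$. The restriction~\eqref{y-range} enters only at the second stage, through the requirement $D\le\min(R,\,y^c,\,\exp(c\log x/\log\log x))$ of~\eqref{eq:Dupperbound}, which is what Proposition~\ref{Errorfaps1} and the smooth large sieve need; see Remark~\ref{rem:y-range}.
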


It would be interesting to extend the range \eqref{y-range} in Theorem \ref{MainTheorem} down to any $y\geq (\log x)^C$ for some large constant $C$. We discuss the main issue that forces us to restrict the range in Theorem \ref{MainTheorem}  to $y>\exp ( (\log x)^{1/2+o(1)})$ in Remark~\ref{rem:y-range}. In our proofs we have used the range $y\geq (\log x)^C$  when we can, as an aid to future research on this topic, and to make clear what are the sticking points. 

Fouvry and Tenenbaum (Th\'eor\`eme 2 in \cite{FT2}) established such a result when $f$ is the characteristic function of the $y$-smooth integers (with $y < x^{\delta}$) and $a_2=1$, in the same range $q\leq x^{3/5-\ee}$, but with the bound
$\ll x/(\log x)^A$. This was improved by Drappeau \cite{Dr1} to  $\ll \Psi(x,y)/(\log x)^A$ for 
$(\log x)^C<y\leq x^\delta$. 

The  proof  of Theorem \ref{MainTheorem} combines the ideas from our earlier articles \cite{Dr1} and \cite{GSh}. Perhaps the most innovative feature of this article, given \cite{Dr1} and \cite{GSh}, comes in Theorem \ref{thm:large-sieve-smooth} in which we prove a version of the classical large sieve inequality (towards which Roth's work~\cite{Rot} played a pivotal role) for (the notably sparse) sequences supported on the $y$-smooth numbers, which may be of independent interest.

 \section{Reduction to a larger set of exceptional moduli} 
 
 We begin by modifying estimates from  \cite{Dr1} to prove Theorem~\ref{SaryResult}, which is a version of Theorem \ref{MainTheorem} with a far larger exceptional set of characters. This is key to the proof of Theorem \ref{MainTheorem} since we now only need to cope with relatively small moduli. We therefore define  
 $\mathcal A(D)$ to be the set of all primitive characters of conductor $\leq D$.

 \begin{theorem}   \label{SaryResult} For fixed $\varepsilon, A>0$,  there exist $C,\delta>0$  such that for any $y$ in the range $(\log x)^C<y  \leq x^{\delta}$,  and any $f\in \mathcal C$  which is only supported on $y$-smooth numbers,
 we have
 \[ \sum_{\substack{q\leq x^{3/5-\varepsilon} \\ (q,a_1a_2)=1}}   \left|  \Delta_{\mathcal A}(f,x;q,a_1\overline{a_2})  \right|    \ll_A \frac{\Psi(x,y)}{(\log x)^A}, \]
 for any integers $a_1,a_2$ for which $1\leq |a_1|, |a_2| \leq x^\delta$, with $\mathcal A=\mathcal A(D)$ where $D = (x/\Psi(x,y))^2 (\log x)^{2A+20}$.
 \end{theorem}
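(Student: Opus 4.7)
The plan is to adapt Drappeau's Bombieri--Vinogradov theorem for smooth numbers \cite{Dr1} to the larger class $\mathcal{C}$, exploiting that $|\Lambda_f|\leq \Lambda$ throughout.

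First I would apply a Heath--Brown type combinatorial identity to $\Lambda_f$ together with partial summation, expressing $\sum_{n\leq x} f(n)\mathbf{1}_{n\equiv a\pmod q}$ as a bounded linear combination of Type I and Type II bilinear sums
$$\sum_{\substack{m\sim M,\ n\sim N \\ mn\equiv a\pmod q}} \alpha_m\beta_n, \qquad MN\asymp x,$$
where the coefficients $\alpha,\beta$ are supported on $y$-smooth integers and enjoy divisor-like $L^2$ bounds. The smoothness of the support is crucial: any factor $m$ whose size lies outside the preferred range can be split at a prime $p\leq y$ to rebalance $M$ and $N$, at the cost of a multiplicative factor $y=x^{o(1)}$. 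This is the mechanism of \cite{Dr1} that makes the dispersion method applicable for $Q$ approaching $x^{3/5}$.

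Next I would handle the Type II sums via the Linnik--Fouvry--Tenenbaum dispersion method: expanding the square in $q$, applying Poisson summation in the longer variable, and reducing to averages of incomplete Kloosterman sums. The key input is the Deshouillers--Iwaniec spectral bound for sums of Kloosterman sums, which is responsible for the exponent $3/5$. Type I sums yield to the classical Bombieri--Vinogradov theorem combined with the large sieve, applied only to characters of conductor $>D$. Subtracting in $\Delta_{\mathcal{A}(D)}$ the contribution of \emph{all} characters of conductor $\leq D$ removes a priori any possible bias of $f$ with such characters; only residual, genuinely non-exceptional characters remain to be bounded by $L^2$ methods. The threshold $D=(x/\Psi(x,y))^2(\log x)^{2A+20}$ is calibrated so that the multiplicative-character large sieve applied to the remaining characters delivers a saving of $\Psi(x,y)/(\log x)^A$ rather than merely $x/(\log x)^A$, reflecting the genuine $\ell^2$-size of smooth-supported sequences.

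The main obstacle will be carrying generic coefficients $\alpha_m,\beta_n$---arising from the Heath--Brown identity applied to an arbitrary $f\in\mathcal{C}$---through a dispersion argument that in \cite{Dr1} was written for the rigid case of the characteristic function of $y$-smooth integers. Each of Cauchy--Schwarz, Poisson summation, and the spectral Kloosterman input must be re-examined so that only divisor-type $L^2$ bounds on $\alpha,\beta$ are used, and all $(\log x)^{c}$ losses must be tracked to fit within the polylogarithmic budget implicit in the definition of $D$. A secondary, largely routine modification is the replacement of the fixed residue $a$ by $a_1\overline{a_2}\pmod q$, requiring only minor bookkeeping in the dispersion identities.
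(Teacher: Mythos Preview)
Your proposal identifies several correct ingredients---the dispersion method with Deshouillers--Iwaniec input from \cite{Dr1}, the role of the smooth-number factorisation in reaching the exponent $3/5$, and the idea that removing characters of conductor $\leq D$ leaves a large-sieve saving proportional to $D^{-1/2}$---but it misidentifies both where the work lies and how the saving $\Psi(x,y)/(\log x)^A$ arises.

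First, the Heath--Brown identity is neither needed nor readily available for $\Lambda_f$ with arbitrary $f\in\mathcal C$: that identity rests on the pair of relations $-\zeta'/\zeta=\sum\Lambda(n)n^{-s}$ and $1/\zeta=\sum\mu(n)n^{-s}$, and there is no canonical analogue of the second for a general $F$. The paper instead uses directly the factorisation of $y$-smooth integers. For $n$ squarefree and $y$-smooth one writes $n=m\cdot n'\cdot \ell$ by grouping prime factors according to size (the conditions $P^+(n')<P^-(m)$, $P^+(m)<P^-(\ell)$), which places the three factors in any prescribed dyadic ranges up to an overshoot of at most $y$. Since $m,n',\ell$ are pairwise coprime, $f(n)=f(m)f(n')f(\ell)$ with each factor $1$-bounded. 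The passage from squarefree $f$ to general $f\in\mathcal C$ is done by extracting the largest powerful divisor of $n$, whose contribution is controlled trivially.

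Second, the ``main obstacle'' you anticipate does not exist: Theorem~3 of \cite{Dr1}, and its refinement here (Lemma~\ref{lemma:triconv}), are already stated for \emph{arbitrary} $1$-bounded sequences $(\alpha_m),(\beta_n),(\lambda_\ell)$. There is no rigid characteristic-function case to generalise. The genuine new step is to track the dependence on $D$ through the dispersion argument when $\cu_D$ replaces the full main-term subtraction. Reducing $D$ only shrinks the off-diagonal error terms in the Kloosterman analysis; the sole place $D$ enters nontrivially is the diagonal contribution $X_1-2\mathrm{Re}(X_2)+X_3$, where the multiplicative large sieve over characters of conductor $>D$ yields a factor $D^{-1}$, hence $D^{-1/2}$ after Cauchy--Schwarz. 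This is not a separate ``Type~I'' step but lives inside the dispersion computation itself.

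Finally, the output of all this is the bound $\ll D^{-1/2}x(\log x)^{10}$, with $x$ rather than $\Psi(x,y)$; no ``$\ell^2$-size of smooth-supported sequences'' is invoked. The specific choice $D=(x/\Psi(x,y))^2(\log x)^{2A+20}$ is precisely the value that converts $D^{-1/2}x(\log x)^{10}$ into $\Psi(x,y)(\log x)^{-A}$.
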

  
We prove this by modifying some of the estimates in   \cite{Dr1}. For any $D\geq 1$  and  integer  $q\geq 1$ let
\begin{equation}
\cu_D(n ; q) = \1_{n\equiv 1\mod{q}} - \frac1{\vphi(q)} \ssum{\chi\mod{q} \\ \cond(\chi) \leq D} \chi(n),\label{eq:def-cu}
\end{equation}
so that 
\[
\Delta_{\mathcal A}(f,x;q,a_1\overline{a_2}) = \sum_{n\leq x} f(n) \cu_D(n\overline{a_1}a_2 ; q).
\]
Note that~$\cu_D(n;q)=0$ unless~$(n, q)=1$ and~$q>D$, in which case
\begin{equation}  \label{eq:cu-trivial}
\begin{aligned}
|\cu_D(n; q)| {}& \leq \1_{n\equiv 1\mod{q}} + \frac1{\vphi(q)} \ssum{r\leq D \\ r|q} \varphi(r) \\
{}& \leq \1_{n\equiv 1\mod{q}} + \frac{D\tau(q)}{\vphi(q)}.
\end{aligned}
\end{equation}
For $(n,q)=1$, since
\[ \sum_{\substack{\chi\pmod q \\ \cond(\chi)\leq D}} \chi(n) = \sum_{\substack{s \leq D \\ s \mid q}} \sum_{\substack{\psi\pmod s \\ \psi\text{ primitive}}} \psi(n) = \sum_{\substack{s \leq D \\ s|q}} \sum_{d|s} \mu(s/d) \varphi(d) \1_{d | n-1}, \]
by letting $b = s/d$ we obtain the alternate expression
\begin{equation}  \label{eq:cu-moebius}
\cu_D(n; q) = \1_{n\equiv 1\mod{q}} - \frac1{\vphi(q)} \ssum{d\leq D \\ d| (q,n-1)} \varphi(d) \ssum{b\leq D/d \\ b| q/d}   \mu(b) .
\end{equation}
 Theorem~\ref{SaryResult}  is an immediate consequence of Theorem \ref{thm:equidist-noMT}.

\begin{theorem}\label{thm:equidist-noMT}
For any fixed~$\ee>0$, there exists~$C, \delta>0$ such that whenever
$$ 1 \leq D \leq x^\delta, \qquad (\log x)^C \leq y \leq x^\delta, $$
we have, uniformly for~$0<|a_1|, |a_2| \leq x^\delta$ and~$f\in{\mathcal C}$,
\begin{equation}
\ssum{q\leq x^{3/5-\ee} \\ (q, a_1a_2)=1} \Big| \sum_{n\in S(x, y)} f(n) \cu_D(n\overline{a_1}a_2 ; q) \Big| \ll_\ee D^{-\frac12} x (\log x)^{10}.\label{eq:bound-disc-cu}
\end{equation}
\end{theorem}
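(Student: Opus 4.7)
The plan is to adapt the dispersion-method argument of Drappeau~\cite{Dr1}---which handles the special case $f=\mathbf{1}_{S(x,y)}$, $a_2=1$, and subtracts only the leading main term---to three simultaneous extensions: an arbitrary $f\in \mathcal{C}$ supported on $y$-smooth integers, the two-sided residue class $a_1\overline{a_2}\pmod q$, and the character-removed weight $\cu_D$ in place of just the main term. A convenient starting point is the identity
$$
\cu_D(n;q) = \frac{1}{\varphi(q)} \sum_{\substack{\chi\pmod q \\ \cond(\chi)>D}} \chi(n),
$$
which recasts the target as a Bombieri-Vinogradov-type bound with quantitative dependence on the conductor threshold $D$.

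After a dyadic decomposition $q\sim Q\leq x^{3/5-\ee}$, I would decompose $f$ into bilinear sums via a Heath-Brown-style identity applied to $\Lambda_f$ (permitted because $|\Lambda_f|\leq \Lambda$ by definition of $\mathcal{C}$) together with the convolution identity $f\log = f*\Lambda_f$. The $y$-smoothness of the support of $f$ provides flexibility in the factorisations $n=ab$, producing $O((\log x)^{O(1)})$ Type I sums (one short variable, $A\ll x^\rho$) and Type II sums (both variables of intermediate length).

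Type I contributions are estimated by completing the long smooth variable via the M\"obius-type expansion \eqref{eq:cu-moebius}, followed by Polya-Vinogradov-type bounds for incomplete character sums; the cancellation induced by the $\cu_D$-weight restricts the surviving characters to conductor~$>D$, and application of the large sieve delivers the $D^{-1/2}$ saving. Type II contributions follow the dispersion argument of \cite{Dr1} essentially verbatim: Cauchy-Schwarz in the pair $(q,a)$, expansion of the square, and reduction to sums of Kloosterman fractions of moduli $\leq Q\leq x^{3/5-\ee}$, to which the Deshouillers-Iwaniec spectral bound applies and provides cancellation past the $x^{1/2}$-barrier. The two-sided twist $a_1\overline{a_2}$ only modifies the arguments inside the Kloosterman sums and is absorbed by the spectral estimate, while the $\cu_D$-weight is handled by splitting $q$ into the divisor regimes dictated by \eqref{eq:cu-moebius}.

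I expect the main obstacle to be the coherent bookkeeping: the $\cu_D$-expansion, the dispersion expansion, and the smooth-number combinatorial decomposition each introduce divisors of $q$ or $n$ that must be reconciled, while preserving the full Deshouillers-Iwaniec saving and securing the $D^{-1/2}$ factor with only $(\log x)^{10}$ logarithmic loss, uniformly in $(f,a_1,a_2,D)$. This uniformity---rather than any new analytic input---is the essential novelty beyond \cite{Dr1} and constitutes the bulk of the technical work.
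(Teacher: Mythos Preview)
Your high-level instinct---that Theorem~\ref{thm:equidist-noMT} is obtained by re-running the dispersion argument of~\cite{Dr1} with the character set restricted to conductor~$>D$---is correct, but two concrete steps in your plan would not go through as written.

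First, the combinatorial decomposition. There is no ``Heath-Brown-style identity for $\Lambda_f$'': Heath-Brown's identity is a specific algebraic fact about $\zeta'/\zeta$, and the bound $|\Lambda_f|\leq\Lambda$ does not transfer it to a general $f\in\mathcal C$. More to the point, any Heath-Brown-type decomposition would produce factors ranging over \emph{all} integers, whereas here every factor must remain $y$-smooth. The paper (following~\cite{Dr1}) instead uses the purely combinatorial smooth-number factorisation $n=m\cdot n'\cdot\ell$ obtained by ordering the prime factors of $n$ by size and cutting at prescribed thresholds~$M_0,L_0$; this yields a genuine \emph{tri}convolution (Lemma~\ref{lemma:triconv}), not a Type~I/Type~II dichotomy. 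For this to interact with a general multiplicative $f$ one needs $f(mn'\ell)=f(m)f(n')f(\ell)$, which only holds when the three factors are coprime---hence the paper first proves the result for $f$ supported on squarefree integers (Proposition~\ref{prop:equidist-sqf}) and then recovers the general case by extracting the largest powerful divisor of $n$. Your outline omits this reduction entirely.

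Second, the location of the $D^{-1/2}$ saving. It does not come from a Type~I estimate via P\'olya--Vinogradov plus the large sieve on characters of conductor~$>D$. In the paper the gain appears at the very end of the dispersion computation, in the diagonal term $X_1-2\mathrm{Re}(X_2)+X_3$: applying the multiplicative large sieve with the integration cut off at $t\geq D$ (rather than $t\geq 1$) turns the bound $K^2R^{-1}$ of~\cite{Dr1} into $K^2(RD)^{-1}$, which after the final Cauchy--Schwarz becomes $D^{-1/2}$. All of the Deshouillers--Iwaniec spectral input is unchanged from~\cite{Dr1} and contributes only to the power-saving error terms, not to the $D$-dependence; so the ``coherent bookkeeping'' you anticipate in the Kloosterman-sum part is in fact a non-issue, while the real work is the one-line refinement~\eqref{bound-sumXj} of the large-sieve diagonal.
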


To prove Theorem~\ref{thm:equidist-noMT}, we first prove the following generalisation of Theorem 3 of~\cite{Dr1}, where the bound~$D$ on the conductor is allowed to vary.
\begin{lemma}\label{lemma:triconv}
Let~$M, N, L, R\geq 1$ and~$(\alpha_m)$, $(\beta_n)$, $(\lambda_\ell)$ be three sequences, bounded in modulus by~$1$, supported on integers inside~$(M, 2M]$, $(N, 2N]$, and~$(L, 2L]$ respectively. Let~$x = MNL$. For any fixed~$\ee>0$, there exists~$\delta>0$ such that whenever either the conditions (3.1), or the conditions (3.2) of~\cite{Dr1} are met, we have
\begin{equation}
\ssum{R<r\leq 2R \\ (r, a_1a_2)=1} \Big| \sum_m \sum_n \sum_\ell \alpha_m \beta_n \lambda_\ell \cu_D(mn\ell \overline{a_1} a_2; q) \Big| \ll D^{-\frac12}x(\log x)^3\label{eq:bound-triconv}
\end{equation}
for~$1\leq D \leq x^\delta$.
\end{lemma}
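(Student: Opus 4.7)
The plan is to adapt the proof of Theorem~3 of~\cite{Dr1} by tracking the conductor threshold $D$ explicitly. For $(n,q)=1$, orthogonality $\1_{n\equiv 1\mod q}=\vphi(q)^{-1}\sum_{\chi\pmod q}\chi(n)$ combined with~\eqref{eq:def-cu} gives
\[
\cu_D(n;q) = \frac{1}{\vphi(q)} \ssum{\chi\pmod q \\ \cond(\chi)>D} \chi(n),
\]
so inserting this into~\eqref{eq:bound-triconv} and parameterising each character $\chi\pmod r$ by the primitive character $\chi^*$ of conductor $q^*\mid r$ inducing it (necessarily with $q^*>D$), I would swap the orders of summation and estimate the inner sum over $r\sim R$ with $q^*\mid r$ by $O((\log R)/\vphi(q^*))$. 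After a standard M\"obius inversion to dispose of the coprimality constraint $(mn\ell,r/q^*)=1$, the left-hand side of~\eqref{eq:bound-triconv} reduces to bounding
\[
\log R\ssum{D<q^*\leq 2R} \frac{1}{\vphi(q^*)} \ssum{\chi^*\pmod{q^*}\\ \prim} \Bigl| \sum_{m,n,\ell}\alpha_m\beta_n\lambda_\ell\,\chi^*(a_2\overline{a_1}mn\ell)\Bigr|.
\]

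Next I would cover the range $(D,2R]$ by dyadic intervals $q^*\in(Q^*,2Q^*]$ and, for each dyadic range, revisit the Type~I / Type~II decomposition of~\cite{Dr1} with $Q^*$ now playing the role formerly played by $R$. In the Type~II regime, Cauchy--Schwarz in the primitive characters (yielding an extra factor $Q^{*1/2}$) combined with the multiplicative large sieve
\[
\sum_{q^*\sim Q^*} \ssum{\chi\pmod{q^*}\\ \prim} \Bigl| \sum_n a_n\chi(n)\Bigr|^2 \ll (Q^{*2}+N)\sum_n |a_n|^2
\]
and the exponential-sum bounds on Kloosterman fractions underlying~\cite{Dr1} should produce, together with the outer factor $1/\vphi(q^*)\asymp 1/Q^*$, a bound of shape $Q^{*-1/2}x(\log x)^2$ per dyadic range; the Type~I regime is treated analogously via Poisson summation in the long variable and P\'olya--Vinogradov type bounds. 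Summing $Q^{*-1/2}x(\log x)^2$ over the $O(\log x)$ dyadic ranges $Q^*\in[D,R]$ yields a geometric series dominated by its first term $D^{-1/2}$, so the full bound becomes $\ll D^{-1/2}x(\log x)^3$ as claimed.

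The main obstacle is the bookkeeping in the Type~II analysis: the dispersion/Cauchy--Schwarz/Weil-bound chain used in~\cite{Dr1} was carried out with the conductor threshold essentially fixed, and one must revisit every step of that chain to confirm that the bounds retain the clean $Q^{*-1/2}$ scaling after restriction to primitive characters of conductor $\sim Q^*$, rather than merely absorbing the saving into a logarithmic factor. In particular, when the Cauchy--Schwarz opens the square and the resulting off-diagonal is estimated by Deshouillers--Iwaniec-type bounds, the gain from the conductor restriction must be seen to survive the averaging intact. Once this is verified, the remaining arguments (handling of the coprimality conditions, disposal of the principal character, and applicability of the hypotheses~(3.1) or~(3.2) of~\cite{Dr1} to the dyadic pieces) follow~\cite{Dr1} directly.
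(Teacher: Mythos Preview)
Your route has a structural gap precisely in the range $R>x^{1/2}$ that the lemma is designed to cover (conditions (3.1)--(3.2) of \cite{Dr1} allow $R$ up to roughly $x^{3/5}$). By expanding $\cu_D$ into primitive characters of conductor $q^*>D$ and then summing out $r$ via $\sum_{q^*\mid r\sim R}1/\vphi(r)\ll(\log R)/\vphi(q^*)$, you discard the average over $r\sim R$ before any real work is done. But that average is exactly what powers the saving beyond $x^{1/2}$ in \cite{Dr1}: the Deshouillers--Iwaniec input is a bound on \emph{sums} of Kloosterman sums over moduli $r\sim R$, produced after Poisson summation in the smooth variable against the congruence indicator $\1_{mk\equiv a\pmod r}$. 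Once $r$ has been summed trivially you are left with a pure character sum $\sum_{m,n,\ell}\alpha_m\beta_n\lambda_\ell\,\chi^*(mn\ell)$, and the only available tool is the multiplicative large sieve. In the critical configuration $M\approx x^{1/5}$, $N\approx L\approx x^{2/5}$, any bilinear splitting together with the large sieve requires an inequality of the shape $(Q^{*2}+MN)(Q^{*2}+L)\ll Q^*x$, which fails once $Q^*\gg x^{1/3}$; yet $Q^*$ must range up to $2R\approx x^{3/5}$. Your allusion to ``Kloosterman fractions'' cannot repair this, because after the conversion to characters there is no congruence structure left on which to run Poisson and the spectral machinery; and the conditions (3.1)/(3.2), being two-sided in $R$, do not transfer to the dyadic pieces in $Q^*$ either. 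The obstacle is therefore not bookkeeping but the loss of the $r$-average.

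The paper takes the opposite tack: it keeps the dispersion set-up of \cite{Dr1} entirely intact, with $r\sim R$ still the modulus and Cauchy--Schwarz producing $\mathcal S_1-2\text{\rm Re}(\mathcal S_2)+\mathcal S_3$. The parameter $D$ enters only through the set $\mathcal X=\{\chi\text{ primitive}:\cond(\chi)\le D\}$ appearing in $\mathcal S_2,\mathcal S_3$; since shrinking $\mathcal X$ can only shrink the error terms already controlled in \cite{Dr1}, the entire Kloosterman-sum analysis goes through verbatim with error $O(x^{1-\delta}KR^{-1})$. The sole place $D$ survives is the final large-sieve estimate for the main terms $X_1-2\text{\rm Re}(X_2)+X_3$, where the lower limit of a $t$-integral moves from $x^\ee$ down to $D$ and yields the factor $(RD)^{-1}$, hence $D^{-1/2}$ after the outer Cauchy--Schwarz. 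In short, the correct modification of \cite{Dr1} is not to replace $R$ by $Q^*$ but to leave $R$ alone and track $D$ through a single diagonal contribution.
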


Theorem~3 of~\cite{Dr1} is the special case when~$D$ has maximal size,~$D=x^\delta$. The conditions (3.1) or (3.2) of~\cite{Dr1} concern the relative sizes of $M,N,L$. They are rather technical, but a critical case when the conditions are met is 
\[ R \approx x^{3/5}, \ \ M \approx x^{1/5}, \ \ N \approx x^{2/5}, \ \ L \approx x^{2/5}. \]

\begin{proof}
We follow closely the arguments of~\cite{Dr1}. Roughly speaking, the main point is that reducing the size of~$D$ only reduces the error terms, except in a certain diagonal contribution which yields the dominant error term, and which we analyse more carefully. Proceeding as in section 3 of~\cite{Dr1}, we reduce to the estimation of~${\mathcal S}_1 - 2\text{\rm Re} ({\mathcal S}_2) + {\mathcal S}_3$, where~${\mathcal S}_1$ is defined in the first display of~\cite[page 838]{Dr1},
$$ {\mathcal S}_2 = \ssum{R<r\leq 2R \\ (r, a_1a_2)=1} \frac1{\vphi(r)} \sum_{(m, r)=1} f(m) \summ_{\substack{(k_1, r)=1 \\ k_2 \equiv a_1\overline{a_2 m} \mod{r}}} u_{k_1}\overline{u_{k_2}} \ssum{\chi\mod{r} \\ \cond(\chi)\leq D}\chi(a_1\overline{a_2mk_2}), $$
and
$$ {\mathcal S}_3 = \ssum{R<r\leq 2R \\ (r, a_1a_2)=1} \frac1{\vphi(r)^2} \sum_{(m, r)=1} f(m) \summ_{(k_1k_2, r)=1 } u_{k_1}\overline{u_{k_2}} \summ_{\substack{\chi_1, \chi_2\mod{r} \\ \cond(\chi_j)\leq D}} \chi_1\overline{\chi_2}(a_1\overline{a_2m})\chi_1(k_1)\overline{\chi_2(k_2)}, $$
where $u_k = \sum_{k = n\ell} \beta_n\lambda_{\ell}$, and $f$ is a smooth function supported inside~$[M/2, 3M]$ satisfying~$\|f^{(j)}\|_\infty \ll_j M^{-j}$ for any $j \geq 0$.

The quantity~${\mathcal S}_1$ being the same as in~\cite{Dr1}, we can quote the estimate
\[ {\mathcal S}_1 = {\hat f}(0)X_1 + O(x^{1-\delta}KR^{-1}) \] 
from~\cite[formula~(3.17)]{Dr1}, where $K = NL$. Here~${\hat f}(0) = \int_\R f$, and~$X_1$ is defined at~\cite[formula~(3.12)]{Dr1}. For the estimation of~${\mathcal S}_2$ and~${\mathcal S}_3$, we reproduce sections~3.2 and~3.3 of~\cite{Dr1}, the only difference being that the set~${\mathcal X} = \{\chi\prim:\ \cond(\chi)\leq x^\ee\}$ is replaced with the subset~${\mathcal X} = \{\chi\prim:\ \cond(\chi)\leq D\}$. We claim that the estimates
\[ {\mathcal S}_j = {\hat f}(0)X_j + O(x^{1-\delta} KR^{-1}) \] 
hold for $j \in \{2,3\}$, with
$$ X_2 = \ssum{R<r\leq 2R \\ (r, a_1a_2)=1} \frac1{r\vphi(r)} \summ_{(k_1k_2, r)=1} \sum_{\substack{\chi \mod{r} \\ \cond(\chi)\leq D}} u_{k_1}\overline{u_{k_2}} \chi(k_1 \overline{k_2}), $$
$$ X_3 = \ssum{R<r\leq 2R \\ (r, a_1a_2)=1} \frac1r  \ssum{0<b\leq r \\ (b, r)=1} \Bigg| \frac1{\vphi(r)} \sum_{(k, r)=1} \ssum{\chi \mod{r} \\ \cond(\chi)\leq D} u_k\chi(k\overline{b}) \Bigg|^2. $$
To see this, we merely note that reducing the cardinality of~${\mathcal X}$, and the bound~$D$ on the conductors, leads to better error terms in the analysis. This is clear from the bound on~$R_3$ in~\cite[formula~(3.8)]{Dr1}, which grows proportionally to~$|{\mathcal X}|D^2$, and from the bound on~$\sum_{z\in{\mathcal Z}} \lambda(z)|G_2(z, \xi)|$ in~\cite[formula~(3.10)]{Dr1}, which grows proportionally to~$|{\mathcal X}|$.

Finally we are left with evaluating~$X_1 - 2\text{\rm Re}(X_2) + X_3$, which makes use of the multiplicative large sieve. Proceeding as in section~3.6 of~\cite{Dr1}, we find
\begin{align*}
{}& X_1 - 2\text{\rm Re} (X_2) + X_3 \\
\ll {}& R^{-1}(\log R)^2 \sum_{d\leq R} \frac{\tau(d)}{\vphi(d)} \int_D^\infty \Big(\min(2R, t)^2 + \frac Kd\Big)\sum_{K/d < k' \leq 2K/d} |u_{k'd}|^2 \frac{\dd t}{t^2} \\
\ll {}& R^{-1}(\log R)^2 K(\log K)^3 \sum_{d\leq R} \frac{\tau(d)^3}{d\vphi(d)} \Big(\frac K{dD} + R\Big) \\
\ll {}& (\log x)^5 K^2 (RD)^{-1}. \numberthis\label{bound-sumXj}
\end{align*}
Here we have used the bound\footnote{Note that there is a factor~$(\tau(d)\log K)^2$ missing in the third display, p.852 of~\cite{Dr1}.}~$|u_k|\leq \tau(k)$, and the hypothesis~$R\leq x^{-\ee}K \leq K/D$. Following~\cite[formula~(3.34)]{Dr1}, this leads to the upper bound
\begin{align*}
{}& \ssum{R<r\leq 2R \\ (r, a_1a_2)=1} \Big| \sum_m \sum_n \sum_\ell \alpha_m \beta_n \lambda_\ell \cu_D(mn\ell \overline{a_1} a_2; q) \Big| \\
\ll {}& (M^2R\{X_1 - 2\text{\rm Re} (X_2) + X_3\})^{1/2} + O(x^{1-\delta/3}).
\end{align*}
The claimed bound~\eqref{eq:bound-triconv} then follows by~\eqref{bound-sumXj}.
\end{proof}

To deduce Theorem~\ref{thm:equidist-noMT} from Lemma~\ref{lemma:triconv}, we start with the following special case of Theorem~\ref{thm:equidist-noMT}.

\begin{proposition}\label{prop:equidist-sqf}
Theorem~\ref{thm:equidist-noMT} holds true for functions~$f$ supported on squarefree integers.
\end{proposition}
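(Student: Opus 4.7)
The plan is to decompose the sum $\sum_{n \in S(x,y)} f(n)\cu_D(n\overline{a_1}a_2;q)$ into $O((\log x)^{O(1)})$ triple convolutions matching the hypotheses of Lemma~\ref{lemma:triconv}, and then to apply that lemma to each. The squarefree assumption is what makes this decomposition clean.

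First I would partition the primes in $[2, y]$ into $O(\log y)$ disjoint dyadic blocks. For a $y$-smooth squarefree $n$, let $n_k$ be the product of the prime factors of $n$ lying in the $k$-th block; multiplicativity combined with squarefreeness gives $n = \prod_k n_k$ and $f(n) = \prod_k f(n_k)$. Each ordered three-partition of the block indices into $J_1 \sqcup J_2 \sqcup J_3$ then yields a factorization $n = m n' \ell$ with $m = \prod_{k \in J_1} n_k$, and similarly for $n'$ and $\ell$. Applying a smooth dyadic partition of unity in the sizes of $m, n', \ell$, the sum becomes
\[
\sum_{n \in S(x,y)} f(n) \cu_D(n \overline{a_1} a_2; q) = \sum_{M, N, L} \sum_{(J_1, J_2, J_3)} \sum_{m, n', \ell} \alpha_m \beta_{n'} \lambda_\ell \cu_D(m n' \ell \overline{a_1} a_2; q),
\]
where $\alpha_m$ is the product of $f(m)$ with indicators restricting $m$ to the dyadic range $(M, 2M]$ and to prime factors lying in $J_1$'s blocks, and similarly for $\beta_{n'}, \lambda_\ell$. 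The bound $|f(p)| \leq 1$ from $f \in \mathcal{C}$ ensures $|\alpha_m|, |\beta_{n'}|, |\lambda_\ell| \leq 1$, and the disjointness of the block assignments ensures each $n$ is counted once per configuration.

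Next I would verify that for every $n$ in the support of $f$, some choice of dyadic triple $(M, N, L)$ and partition $(J_1, J_2, J_3)$ satisfies condition (3.1) or (3.2) of~\cite{Dr1}; the benchmark case is $(M, N, L) \approx (x^{1/5}, x^{2/5}, x^{2/5})$. Since each prime factor of $n$ is $\leq y = x^\delta$, reassigning a single block between the three factors changes $M$, $N$, or $L$ by at most a factor of $x^\delta$, giving fine enough control to land inside the allowed regime. For $n$ much smaller than $x$ the constraints are easier to satisfy by rescaling, and the restriction $q \leq x^{3/5-\ee}$ leaves additional slack.

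Then I would apply Lemma~\ref{lemma:triconv}, after splitting $q \in (R, 2R]$ for dyadic $R \leq x^{3/5-\ee}$, to obtain $\ll D^{-1/2} x (\log x)^3$ per triple convolution. Summing over the $O(\log x)$ dyadic choices of $R$ and the $O((\log x)^{O(1)})$ configurations of $(M, N, L)$ and $(J_1, J_2, J_3)$ then yields the claimed bound $\ll D^{-1/2} x (\log x)^{10}$.

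The main obstacle is the verification step: showing that every $n$ in the support of $f$ admits a decomposition with dyadic sizes lying in the regime covered by (3.1)/(3.2) of~\cite{Dr1}. The squarefree hypothesis is essential, as it allows each prime of $n$ to be assigned unambiguously to one of the three factors, with no prime-power multiplicities to juggle. In the general non-squarefree setting, one would instead need a Heath--Brown style identity applied to $\Lambda_f$ to unfold the prime-power contributions before reducing to the same triple-convolution setup.
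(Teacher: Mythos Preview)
Your overall strategy—decompose each smooth squarefree $n$ into three factors with disjoint prime supports and feed the pieces to Lemma~\ref{lemma:triconv}—is the right one and matches the paper. But the execution has a real gap.

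The displayed identity is false: summing over \emph{all} ordered three-partitions $(J_1,J_2,J_3)$ of the $O(\log y)$ blocks overcounts each $n$ massively (each $n$ appears once per partition, so the right-hand side is $3^B$ times the left, where $B$ is the number of blocks). You later speak of verifying that \emph{some} partition works for each $n$, but this is incompatible with having summed over all of them; and if instead you select one partition per $n$, that choice depends on the whole of $n$, so the weights $\alpha_m,\beta_{n'},\lambda_\ell$ are no longer functions of $m,n',\ell$ alone, which violates the hypotheses of Lemma~\ref{lemma:triconv}. The related claim that there are only $(\log x)^{O(1)}$ configurations is also wrong: there are $3^{O(\log y)}=y^{O(1)}$ three-partitions, so even if each piece were handled, summing them would swamp the bound.

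The paper resolves this by a \emph{canonical} decomposition based on the ordering of the prime factors: one peels off the largest primes of $n$ into $\ell$ until $\ell>L_0$, then the next-largest into $m$ until $m>M_0$, and the remainder forms $n'$. This makes $(m,n',\ell)$ unique for each $n$, and the linking conditions take the separable-looking form $P^+(m)<P^-(\ell)$, $P^+(n')<P^-(m)$; these are detached analytically (at a cost of extra log powers), and dyadic localisation in $m,n',\ell$ contributes only $O((\log x)^3)$ pieces, giving the final exponent $10$. The paper also treats the range $R\leq x^{4/9}$ separately via a classical Bombieri--Vinogradov type input (\cite[Theorem~17.4]{IwaniecKowalski}), since the size constraints~(3.1)/(3.2) of~\cite{Dr1} are designed for large $R$; your proposal does not distinguish this range.
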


\begin{proof}
We extend the arguments of pages~852-853 of~\cite{Dr1}, renaming the variable~$q$ into~$r$. Suppose first~$R\geq x^{4/9}$. We restrict~$n$ and~$r$ to dyadic intervals~$x<n\leq 2x$ and~$R<r\leq 2R$. Choosing the parameters~$(M_0, N_0, L_0)$ as in~\cite[p.852, last display]{Dr1}, we obtain
\begin{align*}
{}& \ssum{R<r\leq 2R \\ (r, a_1a_2)=1} \Big|\ssum{x<n\leq 2x \\ P^+(n)\leq y} f(n) \cu_D(n\overline{a_1}a_2;r)\Big| \\
= {}& \ssum{R<r\leq 2R \\ (r, a_1a_2)=1} \Big|\ssum{L_0<\ell\leq L_0P^-(\ell) \\  P^+(\ell)\leq y} \ssum{M_0<m\leq M_0 P^-(m) \\ P^+(m)\leq P^-(\ell)} \ssum{x<mn\ell \leq 2x \\ P^+(n) \leq P^-(m)} f(mn\ell) \cu_D(mn\ell\overline{a_1}a_2; r)\Big| \\
= {}& \ssum{R<r\leq 2R \\ (r, a_1a_2)=1} \Big|\ssum{L_0<\ell\leq L_0P^-(\ell) \\  P^+(\ell)\leq y} \ssum{M_0<m\leq M_0 P^-(m) \\ P^+(m)< P^-(\ell)} \ssum{x<mn\ell \leq 2x \\ P^+(n) < P^-(m)} f(m)f(n)f(\ell) \cu_D(mn\ell\overline{a_1}a_2; r)\Big|
\end{align*}
where we have used the fact that~$f$ is supported on squarefree integers in the last equality. The rest of the argument consists in cutting the sums over~$(m, n, \ell)$ in dyadic segments, and analytically separating the four conditions~$x<mn\ell\leq 2x$, $P^+(m)<P^-(\ell)$ and~$P^+(n)< P^-(m)$. The details are identical to the proof of Proposition~2 of~\cite{Dr1}, using our Lemma~\ref{lemma:triconv} instead of \cite[Theorem~3]{Dr1}; we obtain the bound
$$ O(D^{-\frac12} x (\log x)^7 (\log y)^3) = O(D^{-\frac12} x (\log x)^{10}). $$

The Bombieri-Vinogradov range~$R\leq x^{4/9}$ is covered by similar arguments, using~\cite[Theorem~17.4]{IwaniecKowalski} instead of Lemma~\ref{lemma:triconv}.
\end{proof}

\begin{proof} [Deduction of the full  Theorem~\ref{thm:equidist-noMT} from Proposition~\ref{prop:equidist-sqf}]   We  let ${\mathcal K}$ be the set of powerful numbers, that is for $k \in \mathcal K$ if prime $p$ divides $k$ then $p^2$ also divides $k$.
Note that $|{\mathcal K}\cap [1, x]|\ll x^{\frac12}$. Out of every~$n$ counted in the left-hand side of~\eqref{eq:bound-disc-cu}, we extract the largest powerful divisor~$k$. Then from the triangle inequality and the bound~$|f(k)|\leq 1$, the left-hand side of~\eqref{eq:bound-disc-cu} is at most
\begin{equation}
\ssum{q\leq x^{3/5-\ee} \\ (q, a_1a_2)=1} \ssum{k\in {\mathcal K}\cap  S(x, y) \\ (k, q)=1} \Big| \ssum{n\in S(x/k, y) \\ (n, k)=1} \mu^2(n)f(n)\cu_D(kna_2\overline{a_1}; q)\Big|.\label{eq:sqfl-extr}
\end{equation}
Let~$K\geq 1$ be a parameter. We use the trivial bound~\eqref{eq:cu-trivial} on the contribution of~$k>K$, getting
$$ \sum_{q\leq x^{3/5-\ee}} \ssum{k\in{\mathcal K} \\ k>K} \sum_{n\leq x/k} \Big(\1_{ka_2n\equiv a_1 \mod{q}} + \frac{D\tau(q)}{\vphi(q)}\Big) = T_1 + T_2, $$
say, where we have separated the contribution of the two summands. Executing the sum over~$q$ first, and separating the case~$kn|a_1$, we find
$$ T_1 \leq \sum_{q\leq x^{3/5-\ee}} \tau(|a_1|)^2 + \ssum{k\in{\mathcal K} \\ k>K} \sum_{n\leq x/k} \tau(|kna_2 - a_1|) \ll x^{4/5} + x^{1+\ee} D K^{-\frac12}. $$
It is easy to see that~$T_2 \ll x^{1+\ee} D K^{-\frac12}$ as well. Next, to each~$1\leq k \leq K$ in~\eqref{eq:sqfl-extr}, by hypothesis, we may use Proposition~\ref{prop:equidist-sqf} with~$x\gets x/k$, $a_2 \gets ka_2$ and~$f(n) \gets \1_{(n, k)=1} \mu^2(n)f(n)$, and obtain the existence of~$C, \delta_1>0$ such that, for~$|a_1|,|a_2k| \leq x^{\delta_1}$ and~$(\log x)^C \leq y \leq x^{\delta_1}$,
$$ \ssum{q\leq x^{3/5-\ee} \\ (q, a_1a_2)=1} \Big| \ssum{n\in S(x/k, y) \\ (n, k)=1} \mu^2(n)f(n)\cu_D(n\overline{a_1}ka_2; q)\Big| \ll D^{-\frac12} k^{-1} x (\log x)^{10}. $$
We take~$\delta = \delta_1/7$, $K=x^{\delta_1/2}$, and sum over~$k\leq K$, using~$\sum_{k\in{\mathcal K}} k^{-1} < \infty$. By hypothesis~$D\leq x^\delta$, so that~$x^{4/5}\ll DK^{-\frac12}x \ll D^{-\frac12} x^{1-\delta/2}$, and we find that~\eqref{eq:sqfl-extr} is at most~$\ll D^{-\frac12} x (\log x)^{10}$ as claimed.
\end{proof}

 \section{Altering the set of exceptional characters}

To prove Theorem \ref{MainTheorem} we need to reduce the set of exceptional characters from $\mathcal A(D)$ to $\Xi$. We shall set this up in Proposition \ref{Moduli Reduction}.
  
It is convenient to write $b=a_1/a_2$ (which is $\equiv a_1\overline{a_2} \pmod q$) and to define $(q,b)$ to mean $(q,a_1a_2)$. Thus in Theorem \ref{SaryResult}  we are working with
\[
\sum_{\substack{q\leq Q \\ (q,b)=1}}   \left|  \Delta_{\mathcal A}(f,x;q,b)  \right|  
\]
for $Q=x^{3/5-\varepsilon}$.

\begin{lemma} \label{Lemm X and A} Let $\mathcal A = \mathcal A(D)$ for some $D \geq 2$. Suppose that $\Xi\subset   \mathcal A$.  If $(b,q)=1$ then
\[
 \begin{split}
& \Delta_\Xi(f,x;q,b) -  \Delta_{\mathcal A}(f,x;q,b)  \\ 
  &=   \frac 1{\varphi(q)} \sum_{\substack{\ell\geq 1 \\ p|\ell \implies p|q}} g(\ell)   
    \sum_{\substack{ d\leq D \\  (d,\ell)=1 \\ d|q}} \varphi(d) \Delta_{\Xi}(f,x/\ell;d,b\overline{\ell})
  \sum_{\substack{n\leq D/d \\ n|q/d}}  \mu(n),
 \end{split}
 \]
where $g$ is the multiplicative function with $F(s)G(s)=1$.
\end{lemma}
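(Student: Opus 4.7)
The plan is to evaluate the right-hand side directly and match it term-by-term against
\[ \Delta_\Xi - \Delta_{\mathcal A} = \frac{1}{\varphi(q)} \sum_{\chi \in \mathcal A_q} \chi(b) S_f(x,\chi) - \frac{1}{\varphi(q)} \sum_{\chi \in \Xi_q} \chi(b) S_f(x,\chi), \]
which is immediate from the definitions. Split the right-hand side of the lemma according to the two terms in $\Delta_\Xi(f,x/\ell; d, b\overline{\ell})$, writing it as $P_1 - P_2$ where $P_1$ carries the partial sum $\sum_{m\leq x/\ell,\, m\equiv b\overline{\ell}(d)} f(m)$ and $P_2$ carries the character contribution. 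The task then reduces to proving $P_1 = \varphi(q)^{-1}\sum_{\chi\in\mathcal A_q}\chi(b)S_f(x,\chi)$ and $P_2 = \varphi(q)^{-1}\sum_{\chi\in\Xi_q}\chi(b)S_f(x,\chi)$.

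The identification of $P_1$ starts by applying \eqref{eq:cu-moebius} to write
\[ \frac{1}{\varphi(q)}\sum_{\chi\in\mathcal A_q}\chi(b) S_f(x,\chi) = \frac{1}{\varphi(q)} \sum_{\substack{d\mid q,\ d\leq D}} \varphi(d) \Bigl(\sum_{\substack{b'\mid q/d\\ b'\leq D/d}}\mu(b')\Bigr) \sum_{\substack{n\leq x,\ (n,q)=1\\ n\equiv b\,(d)}} f(n). \]
The crux is converting the constraint $(n,q)=1$ into the $g$-weighted divisor sum appearing in $P_1$. For this I would invoke the identity
\[ \1_{(n,q)=1}\,f(n) = \sum_{\substack{\ell \mid n\\ p\mid\ell\,\implies\,p\mid q}} g(\ell)\,f(n/\ell), \]
which follows from $f\ast g = \delta$ applied to the decomposition $n = n_q\cdot n_{q'}$ into its $q$-part and $q$-coprime part. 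After substituting $n=\ell m$ and noting that $\ell m\equiv b\,(d)$ together with $(b,d)=1$ forces $(\ell,d)=1$ (so that $m\equiv b\overline{\ell}\,(d)$ makes sense), the expression reorganizes exactly into $P_1$.

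For $P_2$, expand $S_f(x/\ell,\chi)=\sum_m f(m)\overline{\chi}(m)$ and substitute $n=\ell m$; the character factorises as $\chi(b\overline{\ell m})=\chi(b\overline{n})$ whenever $(n,d)=1$, and the same identity collapses the inner sum $\sum_\ell g(\ell)f(n/\ell)$ back to $\1_{(n,q)=1}f(n)$. This reduces $P_2$ to
\[ \frac{1}{\varphi(q)}\sum_{\substack{n\leq x\\(n,q)=1}} f(n) \sum_{\substack{d\mid q\\ d\leq D}} \Bigl(\sum_{\substack{b'\mid q/d\\ b'\leq D/d}}\mu(b')\Bigr) \sum_{\chi\in\Xi_d} \chi(b\overline{n}). \]
Grouping each $\chi\in\Xi_d$ by the primitive $\psi\in\Xi$ of conductor $c$ that induces it, it suffices to establish the combinatorial identity
\[ \sum_{\substack{d\mid q,\ c\mid d\\ d\leq D}} \sum_{\substack{b'\mid q/d\\ b'\leq D/d}} \mu(b') = \1_{c\mid q}. \]
Writing $d=cd'$ and $t=d'b'$, the inner double sum collapses via $\sum_{d'\mid t}\mu(t/d')=\1_{t=1}$, leaving only the contribution $t=1$, which yields $1$ when $c\mid q$ and an empty sum otherwise. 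This produces exactly $\varphi(q)^{-1}\sum_{\chi\in\Xi_q}\chi(b)S_f(x,\chi)$, and subtracting completes the proof.

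The main bookkeeping obstacle is keeping track of the interlocking coprimality conditions — $(n,q)$, $(\ell,d)$, $(n,d)$, $(b,d)$ — during the double swap $n\leftrightarrow(\ell,m)$. Once one notices that $\ell m\equiv b\,(d)$ with $(b,q)=1$ forces $(\ell,d)=1$ for free, and that the $\bar\ell,\bar n$ inverses are unambiguous throughout, everything lines up cleanly and the two Möbius identities — \eqref{eq:cu-moebius} on the $\mathcal A$-side and the Dirichlet-inverse identity on the coprimality side — do all of the essential work.
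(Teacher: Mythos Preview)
Your proof is correct. The two key ingredients --- the Dirichlet-inverse identity $\1_{(n,q)=1}f(n)=\sum_{\ell\mid n,\ p\mid\ell\Rightarrow p\mid q}g(\ell)f(n/\ell)$ to handle the coprimality constraint, and the M\"obius combinatorics underlying~\eqref{eq:cu-moebius} --- are exactly the ones the paper uses, but the organization is genuinely different. The paper works \emph{forward} from the left-hand side: it writes $\Delta_\Xi-\Delta_{\mathcal A}$ as a single sum over $\chi\in\mathcal A_q\setminus\Xi_q$, introduces an auxiliary object $\Delta_{\Xi,q}(f,x;d,b)$ (the analogue of $\Delta_\Xi$ with an extra $(n,q)=1$ constraint), applies M\"obius inversion on the primitive-character decomposition to express the $\mathcal P(m)$-sum in terms of $\Delta_{\Xi,q}$, and only then uses the $g$-convolution to pass from $\Delta_{\Xi,q}$ to $\Delta_\Xi$. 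You instead work \emph{backward} from the right-hand side, split it as $P_1-P_2$ according to the two terms in $\Delta_\Xi$, and identify $P_1$ with the $\mathcal A_q$-sum and $P_2$ with the $\Xi_q$-sum separately; your combinatorial identity $\sum_{c\mid d\mid q,\,d\leq D}\sum_{b'\mid q/d,\,b'\leq D/d}\mu(b')=\1_{c\mid q}$ plays the role that the paper's M\"obius inversion step plays. Your route is more direct and avoids the intermediate object $\Delta_{\Xi,q}$; the paper's route makes the inversion step conceptually cleaner by isolating it on the character side before the $g$-convolution enters.
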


\begin{proof}
 If $(b,q)=1$ then
 \[
 \begin{split}
\Delta_\Xi(f,x;q,b) -  \Delta_{\mathcal A}(f,x;q,b) &= 
   \frac 1{\varphi(q)} \sum_{ \substack{  \chi  \in   {\mathcal A}_q \\  \chi  \not\in   \Xi_q  }}  \chi(b) S_f(x,\chi) \\ 
  &=   \frac 1{\varphi(q)}  \sum_{\substack{m\leq D \\ m|q}} \sum_{\substack{  \chi\in \mathcal P(m)_q\\  \chi  \not\in   \Xi_q  }} \chi(b) S_f(x,\chi) ,
\end{split}
 \]
where $\mathcal P(m)$ denotes the set of primitive characters $\pmod m$, as $\mathcal A$ is the set of all primitive characters of conductor $\leq D$ . 
Let
$\mathcal C(m)$ denote  the  set of all characters $\pmod m$. For $m|q$ we define
\[
 \begin{split}
\Delta_{\Xi,q}(f,x;m,b) &:=\sum_{\substack{n\leq x \\ n\equiv a \pmod m \\ (n,q)=1}} f(n) -   
\frac 1{\varphi(m)} \sum_{ \substack{  \chi  \in  \mathcal C(m)_q \cap \Xi_q}} \chi(b) S_f(x,\chi)\\
&= \frac 1{\varphi(m)} \sum_{\substack{  \chi\in \mathcal C(m)_q\\  \chi  \not\in   \Xi_q  }} \chi(b) S_f(x,\chi) 
\\
&= \frac 1{\varphi(m)} \sum_{d|m}  \sum_{\substack{  \chi\in \mathcal P(d)_q\\  \chi  \not\in   \Xi_q  }} \chi(b) S_f(x,\chi) .
\end{split}
\]
By M\"obius inversion we deduce that, for $m|q$,
\[
 \sum_{\substack{  \chi\in \mathcal P(m)_q\\  \chi  \not\in   \Xi_q  }} \chi(b) S_f(x,\chi)  = \sum_{d|m} \mu(m/d) \varphi(d) \Delta_{\Xi,q}(f,x;d,b) .
\]   

Next we wish to better understand $\Delta_{\Xi,q}(f,x;m,a)$. Let $f_q(p^k)=f(p^k)$ if $p|q,\ p\nmid m$, and $f_q(p^k)=0$ otherwise. Define $g_q$ from $g$ in a similarly way. Note $f_q$ and $g_q$ are simply $f$ and $g$ supported on the integers composed from the prime factors of $q$.
If $(a,m)=1$ then 
\[
\Delta_{\Xi}(f,x;m,a) = \sum_{\substack{\ell\geq 1 \\ (\ell,m)=1}} f_q(\ell)  \Delta_{\Xi,q}(f,x/\ell;m,a\overline{\ell}),
\]
and since $F_qG_q = 1$ we have
\begin{align*}
\Delta_{\Xi,q}(f,x;m,a) &= \sum_{\substack{\ell\geq 1 \\ (\ell,m)=1}} g_q(\ell)  \Delta_{\Xi}(f,x/\ell;m,a\overline{\ell}) \\
&= \sum_{\substack{\ell\geq 1 \\ p|\ell \implies p|q \\ (\ell,m)=1}} g(\ell)  \Delta_{\Xi}(f,x/\ell;m,a\overline{\ell}).
\end{align*}
Substituting this in above then yields
\[
 \sum_{\substack{  \chi\in \mathcal P(m)_q\\  \chi  \not\in   \Xi_q  }} \chi(b) S_f(x,\chi)  = \sum_{d|m} \mu(m/d) \varphi(d)
 \sum_{\substack{\ell\geq 1 \\ p|\ell \implies p|q,\\ (\ell,d)=1}} g(\ell)  \Delta_{\Xi}(f,x/\ell;d,b\overline{\ell}),
  \]   
and the result follows writing  $m=dn$.
\end{proof}

\begin{proposition} \label{Moduli Reduction} Let the notations and assumptions be as in the statement of Theorem~\ref{SaryResult}. Suppose that $\Xi\subset   \mathcal A$. Then
\[
 \begin{split}
&  \sum_{\substack{q\leq x^{3/5-\ee} \\ (b,q)=1}}   \left|  \Delta_\Xi(f,x;q,b)  \right|  \leq 
 O\left(  \frac{\Psi(x,y)}{(\log x)^A} \right)  \\ 
  &+   (\log x)^2 \sum_{\substack{\ell\leq X \\ L:=\prod_{p|\ell} p}}  \frac {\tau(L)}{ \varphi(L)}  
    \sum_{\substack{ d\leq D \\  (d,\ell)=1}}   | \Delta_{\Xi}(f,x/\ell;d,b\overline{\ell}) |,
 \end{split}
 \]
 where $X=(1+\beta)^2 (\log x)^{2A+6}$ with 
 \[
 \beta=\beta(\Xi) := \sum_{\psi \pmod{r_\psi} \in \Xi} \frac 1{r_\psi} . 
 \]
\end{proposition}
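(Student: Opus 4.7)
The plan is to decompose $\Delta_\Xi = \Delta_{\mathcal A} + (\Delta_\Xi - \Delta_{\mathcal A})$, apply Theorem~\ref{SaryResult} to the $\Delta_{\mathcal A}$ term (which supplies the first $O(\Psi(x,y)/(\log x)^A)$), and expand the difference by Lemma~\ref{Lemm X and A} as a triple sum over $(\ell, d, n)$. Swapping summation, I would bring $\ell$ and $d$ outside: for fixed $(\ell,d)$ with $(d,\ell)=1$, the conditions $d\mid q$ and $p\mid\ell\Rightarrow p\mid q$ force $dL\mid q$, where $L=\mathrm{rad}(\ell)$. Writing $q=dLm$ and using the crude bound $|\sum_{n\leq D/d,\,n\mid Lm}\mu(n)|\leq \tau(Lm)\leq \tau(L)\tau(m)$, standard divisor estimates yield
\[
\sum_{\substack{m\leq x^{3/5-\ee}/(dL)\\(b,\,dLm)=1}}\frac{\tau(Lm)}{\varphi(dLm)} \ll \frac{\tau(L)(\log x)^2}{\varphi(d)\varphi(L)}.
\]
The factor $\varphi(d)$ from the lemma cancels, leaving the weight $\tau(L)(\log x)^2/\varphi(L)$; since $f\in\mathcal C$ forces $g\in\mathcal C$ and hence $|g(\ell)|\leq 1$, the contribution of $\ell\leq X$ matches precisely the main term on the right-hand side of the proposition.

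It then remains to show the tail $\ell>X$ contributes at most $O(\Psi(x,y)/(\log x)^A)$. For this I would bound $|\Delta_\Xi(f,x/\ell;d,b\overline{\ell})|$ trivially: since $f$ is supported on $y$-smooth integers and $|f|\leq 1$, its ``local'' part is dominated by the count of $y$-smooth integers in the progression, which sums over $d\leq D$ to $O(\Psi(x/\ell,y)\log D)$ by standard equidistribution of smooth numbers in APs (as in~\cite{FT2, Dr1}); and the character correction is at most $(x/\ell)|\Xi_d|/\varphi(d)$, which sums over $d\leq D$ to
\[
(x/\ell)\sum_{\psi\in\Xi}\sum_{\substack{d\leq D\\ r_\psi\mid d}}\frac{1}{\varphi(d)}\ll (x/\ell)\beta\log D.
\]
Multiplying by $(\log x)^2\tau(L)/\varphi(L)$ and invoking a Rankin-type tail bound on $\sum_{\ell>X}\tau(L)/(\ell\varphi(L))$, the choice $X=(1+\beta)^2(\log x)^{2A+6}$ is exactly what is needed to absorb simultaneously the $\beta$-dependence of the character term and the desired $(\log x)^{-A}$ saving.

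The main obstacle is the tail estimate, where one must juggle three conflicting scales: the slowly decaying arithmetic weight $\tau(L)/\varphi(L)$, the growth of the exceptional-character contribution with $\beta$, and the requirement to beat $\Psi(x,y)/(\log x)^A$. Matching these three forces $X$ to be quadratic in $(1+\beta)\log x$, which explains the specific shape of the threshold in the statement; everything upstream (the divisor bookkeeping, the application of Theorem~\ref{SaryResult}, and the cancellation of $\varphi(d)$ against $\varphi(dL)$) is essentially a routine exchange of sums.
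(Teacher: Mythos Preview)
Your overall architecture matches the paper's proof exactly: decompose $\Delta_\Xi = \Delta_{\mathcal A} + (\Delta_\Xi - \Delta_{\mathcal A})$, apply Theorem~\ref{SaryResult}, expand via Lemma~\ref{Lemm X and A}, swap sums and use $dL\mid q$, bound the inner $q$-sum by $\tau(L)(\log x)^2/(\varphi(d)\varphi(L))$, and handle the tail $\ell>X$ by a Rankin argument. That is precisely what the paper does.

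There is, however, one genuine gap in the tail estimate. You bound the character correction by $(x/\ell)\,|\Xi_d|/\varphi(d)$, and correspondingly your Rankin sum is $\sum_{\ell>X}\tau(L)/(\ell\,\varphi(L))$. With that bound the character contribution to the tail is
\[
\ll x\,(1+\beta)(\log x)^2(\log D)\sum_{\ell>X}\frac{\tau(L)}{\ell\,\varphi(L)}\ \ll\ x\,(1+\beta)(\log x)^3 X^{-1/2},
\]
and plugging in $X=(1+\beta)^2(\log x)^{2A+6}$ yields only $\ll x/(\log x)^A$, not $\Psi(x,y)/(\log x)^A$. Since $x/\Psi(x,y)$ can be as large as a power of $x$ in the range of Theorem~\ref{SaryResult}, this is a real loss; your stated $X$ would need an extra factor $(x/\Psi(x,y))^2$.

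The fix, which the paper uses, is simply to remember that $f$ is supported on $y$-smooth integers when bounding the character sums: $|S_f(x/\ell,\chi)|\leq \Psi_d(x/\ell,y)\ll (\varphi(d)/d)\Psi(x/\ell,y)$ by~\eqref{SmoothUB}. Then both the local and the character pieces are controlled by $\Psi(x/\ell,y)\ll \Psi(x,y)/\ell^{\alpha}$ (with $\alpha>3/4$ in this range), the Rankin sum becomes $\sum_{\ell>X}\tau(L)/(L\,\ell^{\alpha})\ll X^{-1/2}$, and the tail is $\ll \Psi(x,y)(1+\beta)(\log x)^3 X^{-1/2}$, which the stated $X$ handles exactly. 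Once you make this correction, your sketch agrees with the paper in every essential respect.
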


\begin{proof}
Set $Q = x^{3/5-\ee}$. We deduce from Lemma \ref{Lemm X and A}, as  each $|g(\ell)|\leq 1$ since $f,g\in \mathcal C$,  that
\[
 \begin{split}
&  \sum_{\substack{q\leq Q \\ (b,q)=1}}   \left|  \Delta_\Xi(f,x;q,b)  \right|  \leq 
 \sum_{\substack{q \leq Q \\ (b,q)=1}}  \left|  \Delta_{\mathcal A}(f,x;q,b)  \right|   \\ 
  &+  \sum_{\substack{\ell\geq 1 \\ L:=\prod_{p|\ell} p}}  
    \sum_{\substack{ d\leq D \\  (d,\ell)=1}} \varphi(d) | \Delta_{\Xi}(f,x/\ell;d,b\overline{\ell}) |
   \sum_{\substack{q \leq Q \\ (b,q)=1 \\  dL|q }}   \frac 1{\varphi(q)}
     \left|  \sum_{\substack{n\leq D/d \\ n|q/d}}  \mu(n)   \right|  .
 \end{split}
 \]
 The first term on the right-hand side is $ \ll_A \Psi(x,y)/(\log x)^A$  by Theorem \ref{SaryResult}.
 For the sum at the end we have an upper bound
\[
\leq  \sum_{\substack{q \leq Q   \\  dL|q }}   \frac {\tau^*(q/d)}{\varphi(q)} 
\leq  \frac {\tau^*(L)}{\varphi(dL)}  \sum_{\substack{r  \leq Q /dL   }}   \frac {\tau^*(r)}{\varphi(r)} \ll  \frac {\tau(L)}{\varphi(d)\varphi(L)}  (\log x)^2,
\]
where $\tau^*(m)$ denotes the number of squarefree divisors of $m$, writing $q=dLr$, as $L$ is squarefree.  Therefore
\[
 \begin{split}
&  \sum_{\substack{q\leq Q \\ (b,q)=1}}   \left|  \Delta_\Xi(f,x;q,b)  \right|  \leq 
O  \left( \frac{\Psi(x,y)}{(\log x)^A}  \right)  \\ 
  &+  (\log x)^2 \sum_{\substack{\ell\geq 1 \\ L:=\prod_{p|\ell} p}}    \frac {\tau(L)}{ \varphi(L)}  
    \sum_{\substack{ d\leq D \\  (d,\ell)=1}}   | \Delta_{\Xi}(f,x/\ell;d,b\overline{\ell}) |.
 \end{split}
 \]

We will attack this last sum first by employing relatively  trivial bounds for the terms with $\ell$ that are not too small, so that we only have to consider $\ell d$ that are smallish in further detail. Now Theorem 1 of \cite{FT} gives the upper bound
\begin{equation} \label{SmoothUB}
 \Psi_q(x,y) \ll  \frac  {\varphi(q)} q \Psi (x,y)  
\end{equation}
provided $x\geq y\geq \exp((\log\log x)^2)$ and $q\leq x$. Therefore
\[
|\Delta_\Xi(f,x;q,a)| \leq   \Psi(x,y;a,q) +  \frac {| \Xi_q|} {\varphi(q)}  \Psi_q(x,y)  \ll   \Psi(x,y;a,q) +   \frac {| \Xi_q|} {q}  \Psi(x,y) 
 \]
 in this range. Substituting in, the upper bound on the $\ell$th term above becomes
 \[
\ll ( \log x)^2 \cdot   \frac {\tau(L)}{ \varphi(L)} \left( 
 \sum_{\substack{ d\leq D \\  (d,\ell)=1}}  \Psi(x/\ell,y;d,b\overline{\ell})  +
 \Psi(x/\ell,y)      \sum_{\substack{ d\leq D \\  (d,\ell)=1}}  \frac {| \Xi_d|} {d}  \right).
 \]
The second sum over $d$ is therefore 
\[
 \sum_{\psi \pmod {r_\psi}\in \Xi}   \sum_{\substack{ d\leq D \\  (d,\ell)=1 \\ r_\psi|d}}  \frac1d
 \ll    \beta    \frac{\varphi(\ell)}{\ell} \log D  .
\]
For the first term we use Theorem 1 of \cite{Har} which yields that
\[
\sum_{\substack{ d\leq D \\  (d,\ell)=1}} \left|  \Psi(x/\ell,y;d,b\overline{\ell}) - \frac{ \Psi_d(x/\ell,y) } {\varphi(d)} \right| \ll \frac{\Psi(x/\ell,y)}{(\log x)^A},
\]
as $D\leq \sqrt{\Psi(x,y)}/(\log x)^B$ since $y\geq (\log x)^C$.
Therefore, expanding the sum and using \eqref{SmoothUB}, we obtain
\[
\sum_{\substack{ d\leq D \\  (d,\ell)=1}}    \Psi(x/\ell,y;d,b\overline{\ell}) \ll 
\sum_{\substack{ d\leq D \\  (d,\ell)=1}}  \frac{ \Psi(x/\ell,y) } d  +\frac{\Psi(x/\ell,y)}{(\log x)^A}\ll
 \frac{\varphi(\ell)}{\ell} \log D  \cdot \Psi(x/\ell,y).
\]

By Th\'eor\`eme 2.1 of~\cite{dBT} we have
\begin{equation} \label{Psi growth}
 \Psi(x/\ell,y)  \ll \Psi(x,y)/\ell^\alpha,
\end{equation}
where $\alpha >3/4$ in our range for $y$.
Therefore in total the $\ell$th term in 
 \[
\ll   \frac {\tau(L)}{ L\ell^\alpha}  \Psi(x ,y)   (\log x)^2(\log D) \cdot ( 1+ \beta)    .
 \]
Summing over $\ell>X$, we  obtain, taking $\sigma=\alpha-1/4$,
\[
    \sum_{\substack{\ell\geq X \\ L:=\prod_{p|\ell} p}}  \frac {\tau(L)}{ L \ell^{\alpha} }  \leq \sum_{\substack{\ell\\ L:=\prod_{p|\ell} p}}  \frac {\tau(L)}{ L \ell^{\alpha} } (\ell/X)^{\sigma} =
    X^{-\sigma} \prod_p \left( 1 + \frac 2{p(p^{1/4}-1 )} \right) \ll X^{-1/2}.
\]
Taking $X=(1+\beta)^2 (\log x)^{2A+6}$,   the contribution of the    $\ell>X$ is therefore
$ \ll \Psi(x ,y)/(\log x)^A$.
 Combining all of the above then yields the result. 
  \end{proof}

  \section{Putting the pieces together}

In order to prove   Theorem~\ref{MainTheorem} we need Theorem~\ref{SaryResult},
Proposition \ref{Moduli Reduction}, Corollary~\ref{cor:exceptional-good-bound} (which will be proved in the final two sections), and  the following result which is Proposition 5.1 of \cite{GSh}.
 
 \begin{proposition} \label{Errorfaps1} Fix $B\geq 0$ and $0<\eta<\frac 12$.  Given $ (\log x)^{4B+5}\leq y=x^{1/u}\leq x^{1/2-\eta}$ let 
\[
R=R(x,y):=\min\{ y^{ \frac{  \log\log\log x}{3\log u}},  x^{\frac \eta{3\log\log x}}\} \ (\leq y^{1/3}).
\]   
Suppose that $f\in \mathcal C$, and is only supported on $y$-smooth numbers. 
There exists a set, $\Xi$, of primitive characters $\psi \pmod r$ with $r\leq R$,  such that  if $q\leq R$ and $(a,q)=1$ then
\[    
| \Delta_{\Xi}(f,x;q,a) |  \ll  \frac 1{\varphi(q)}  \frac{\Psi(x,y)} {(\log x)^{B}} .
\]
Moreover, one may take $\Xi$ to be $\Xi=\Xi(2B+2\frac13)$, where    $\Xi(C)$ is the set of primitive characters $\psi \pmod r$ with $r\leq R$ such that there exists $x^{\eta}< X\leq x$ for which
\begin{equation} \label{Hyp3}
|S_f(X,\psi)|    \geq \frac {\Psi(X,y)}{(u\log u)^4(\log x)^{C}}  .
\end{equation}
\end{proposition}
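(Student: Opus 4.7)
The plan is to prove Proposition~\ref{Errorfaps1} by combining character orthogonality with the defining smallness property of $\Xi = \Xi(2B + 7/3)$, exploiting throughout that $f$ is supported on $y$-smooth integers and that $q \leq R \leq y^{1/3}$ is small.

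First, by orthogonality of Dirichlet characters modulo $q$,
\[
\Delta_\Xi(f,x;q,a) = \frac{1}{\varphi(q)} \sum_{\substack{\chi \pmod{q} \\ \chi \notin \Xi_q}} \overline{\chi}(a)\, S_f(x,\chi),
\]
so that, after the triangle inequality, it is enough to prove the pointwise estimate $|S_f(x,\chi)| \ll \Psi(x,y)/(\log x)^{B}$ uniformly for each non-exceptional $\chi \pmod q$; the factor $1/\varphi(q)$ together with the bound $\#\{\chi\pmod q\} = \varphi(q)$ then recovers the claimed saving.

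Next, let $\psi \pmod r$ denote the primitive character inducing $\chi$. Since $r \mid q \leq R$, the hypothesis $\chi \notin \Xi_q$ forces $\psi \notin \Xi$, whence by definition of $\Xi(2B+7/3)$,
\[
|S_f(X,\psi)| < \frac{\Psi(X,y)}{(u\log u)^{4}(\log x)^{2B+7/3}} \qquad \text{for every } X \in (x^{\eta}, x].
\]
To pass from $S_f(X,\psi)$ to $S_f(x,\chi)$, I would use $\chi(n) = \psi(n)\mathbf{1}_{(n,q)=1}$ and remove the extra coprimality restriction by inclusion-exclusion over the squarefree divisors of $\rad(q)/\rad(r)$. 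Each resulting sub-sum, after writing $n = dm$, applying multiplicativity of $f$, and a secondary Möbius expansion to enforce $(m,d)=1$, reduces to a character sum $S_{f'}(x/d,\psi)$ for some $f' \in \mathcal{C}$ obtained from $f$ by zeroing its values at a finite set of primes dividing $q/r$ (so $f'$ remains $y$-smooth-supported, and $\psi \notin \Xi$ transfers to $f'$ at the relevant scales).

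Because $R \leq y^{1/3}$ and $y \leq x^{1/2-\eta}$, each intermediate scale satisfies $x/d \geq x/R \geq x^{\eta}$, so the non-exceptional estimate applies at every stage. The total number of sub-sums is $O(\tau(q)^{2}) \leq (\log x)^{o(1)}$, and~\eqref{Psi growth} yields $\Psi(x/d,y) \ll \Psi(x,y)$, so we conclude
\[
|S_f(x,\chi)| \ll \tau(q)^{2}\cdot \frac{\Psi(x,y)}{(u\log u)^{4}(\log x)^{2B+7/3}} \ll \frac{\Psi(x,y)}{(\log x)^{B}},
\]
the exponent $C = 2B+\tfrac{7}{3}$ being chosen precisely to absorb $(u\log u)^{4}$, $\tau(q)^{2}$ and the $\varphi(q)$ in the denominator.

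The main obstacle is the bookkeeping in the reduction step: one must verify that the non-exceptional hypothesis on $\psi$ genuinely transfers to the modified $f'$, which amounts to checking closure of the class $\mathcal{C}$ under the local modification of deleting a set of primes from the support — this is immediate since such deletion only alters $\Lambda_f$ at prime powers of those primes and hence preserves $|\Lambda_{f'}|\leq \Lambda$. A secondary subtlety is ensuring that the intermediate scales $x/d$ do not drop below $x^{\eta}$, which is precisely what dictates the hypothesis $R \leq y^{1/3}$ in the statement.
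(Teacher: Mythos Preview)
The paper does not give a proof of this proposition; it is quoted verbatim as Proposition~5.1 of~\cite{GSh}. That said, your proposed argument has a decisive gap regardless of what the proof in~\cite{GSh} looks like.

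After orthogonality you correctly obtain
\[
|\Delta_\Xi(f,x;q,a)| \leq \frac{1}{\varphi(q)} \sum_{\substack{\chi\pmod q \\ \chi\notin\Xi_q}} |S_f(x,\chi)|,
\]
and you then bound each summand individually by $O(\Psi(x,y)/(\log x)^B)$. But the sum has up to $\varphi(q)$ terms, so this yields only $|\Delta_\Xi|\ll \Psi(x,y)/(\log x)^B$, a factor of $\varphi(q)$ weaker than the claim. Your remark that $C=2B+\tfrac{7}{3}$ is ``chosen precisely to absorb $(u\log u)^4$, $\tau(q)^2$ and the $\varphi(q)$'' would require
\[
\tau(q)^2\,\varphi(q)\ \ll\ (u\log u)^4(\log x)^{B+7/3},
\]
and this is false across most of the stated range. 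For a concrete instance take $\eta=\tfrac14$ and $y=x^{1/5}$: then $u=5$ is bounded, so the right-hand side is $O_B((\log x)^{B+7/3})$, whereas $q$ is allowed to be as large as $R=x^{1/(12\log\log x)}$, which dominates every fixed power of $\log x$. The naive triangle inequality over $\chi\pmod q$ therefore cannot produce the factor $1/\varphi(q)$; the argument in~\cite{GSh} must input more than the single-scale bound $|S_f(X,\psi)|<\Psi(X,y)/((u\log u)^4(\log x)^C)$ and exploit the $y$-smooth structure of $n$ (for instance via a factorization $n=n_1n_2$ creating an extra averaging variable, or an iterated convolution identity) to beat the trivial character count.

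A secondary issue: your assertion that ``$\psi\notin\Xi$ transfers to $f'$'' is not automatic, since $\Xi$ is defined through the sums $S_f(\cdot,\psi)$ for the specific function $f$, and modifying $f$ to $f'$ gives no a~priori control of $S_{f'}(\cdot,\psi)$. This step would need more care even if the main obstacle above were resolved.
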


\begin{proof} [Proof of Theorem~\ref{MainTheorem}] Let $c>0$ be the small constant from Corollary~\ref{cor:exceptional-good-bound}. Set $D = (x/\Psi(x,y))^2 (\log x)^{2A+20}$, and one easily verifies that the hypothesis for $y$ implies that 
\begin{equation}\label{eq:Dupperbound} 
D \leq \min(R, y^c, \exp(c\log x/\log\log x)) 
\end{equation}
from the usual estimate $\Psi(x,y)=xu^{-u+o(u)}$ for smooth numbers.
We will prove Theorem~\ref{MainTheorem} with $\Xi = \Xi(2A+8\frac13)$, where $\Xi(C)$ is the set of primitive characters $\psi\pmod{r}$ with $r \leq D$, such that there exists $x^{1/4} < X \leq x$ for which
\eqref{Hyp3} holds.
By Proposition~\ref{Errorfaps1} with $B = A+3$ and $\eta = 1/4$, we have the bound
\[ |\Delta_{\Xi}(f,x;q,a)| \ll \frac{1}{\varphi(q)} \frac{\Psi(x,y)}{(\log x)^{A+3}} \]
whenever $q \leq D$ and $(a,q)=1$. Moreover, we have the same bound with $x$ replaced by $x/\ell$ for any $\ell = x^{o(1)}$.

The goal of the next two sections will be to prove Corollary~\ref{cor:exceptional-good-bound}, which implies that 
\[ |\Xi| \ll (\log x)^{6A+38}. \]
This implies that $\beta(\Xi)\ll (\log x)^{3A+19}$, and so 
$X\ll (\log x)^{8A+44}$ in Proposition \ref{Moduli Reduction}. Thus for each $\ell \leq X$ and $d \leq D$, we have
  \[
   | \Delta_{\Xi}(f,x/\ell;d,b\overline{\ell}) |  \ll  \frac 1{\varphi(d)}  \frac{\Psi(x/\ell,y)} {(\log x)^{A+3}}  \ll  \frac 1{\varphi(d) \ell^\alpha}  \frac{\Psi(x ,y)} {(\log x)^{A+3}}  ,
   \]
by \eqref{Psi growth}. Therefore 
\[ 
 \begin{split}
&    (\log x)^2 \sum_{\substack{\ell\leq X \\ L:=\prod_{p|\ell} p}}  \frac {\tau(L)}{ \varphi(L)}  
    \sum_{\substack{ d\leq D \\  (d,\ell)=1}}   | \Delta_{\Xi}(f,x/\ell;d,b\overline{\ell}) | \\
& \ll  \frac{\Psi(x ,y)} {(\log x)^{A+1}} \sum_{\substack{\ell\leq X \\ L:=\prod_{p|\ell} p}}  \frac {\tau(L)}{ \varphi(L)\ell^\alpha }  
    \sum_{\substack{ d\leq D \\  (d,L)=1}}  \frac 1{\varphi(d) }  
     \ll  \frac{\Psi(x ,y)} {(\log x)^{A }} \sum_{\substack{\ell\leq X \\ L:=\prod_{p|\ell} p}}  \frac {\tau(L)}{ L\ell^\alpha }   \\
   &\leq \frac{\Psi(x ,y)} {(\log x)^{A }} \prod_{p\leq X}  \left( 1 + \frac {2}{ p(p^\alpha -1)}   \right) \ll \frac{\Psi(x ,y)} {(\log x)^{A }}  .
 \end{split}
 \]
We therefore deduce from Proposition \ref{Moduli Reduction}   that  
\[
   \sum_{\substack{q\leq x^{3/5-\varepsilon} \\ (q,a_1a_2)=1}}   \left|  \Delta_\Xi(f,x;q,a_1\overline{a_2})  \right|
    \ll 
   \frac{\Psi(x,y)}{(\log x)^A}  
    \]
as desired.

To deduce the second part of Theorem~\ref{MainTheorem}, about functions $f$ satisfying the Siegel-Walfisz criterion, we use the following variant of Proposition 3.4 in \cite{GSh}:
  
  \begin{proposition} \label{Using SW}  Fix $\ee > 0$. Let $(\log x)^{1+\ee} \leq y \leq x$ be large. Let $f\in \mathcal C$ be a multiplicative function supported on $y$-smooth integers. Suppose that $\Xi$ is a set of primitive characters, containing $\ll   (\log x)^{C}$ elements,
such that 
 \[
 \sum_{q \sim Q}   \left|  \Delta_{\Xi}(f,x;q,a_q)  \right|  \ll \frac {\Psi(x,y)}{(\log x)^B},
 \]
 for $(a_q,q)=1$ for all $q\sim Q$, where $Q\leq x$.
If the $D$-Siegel-Walfisz criterion holds for $f$, where $D\geq B+C$, then  
\[ 
\sum_{q \sim Q} \left|  \Delta(f,x;q,a_q)  \right|    \ll \frac {\Psi(x,y)}{(\log x)^B}.
\]
\end{proposition}

\begin{proof}
By the definition of $\Delta_{\Xi}$ we have
\[ \left|  \Delta(f,x;q,a_q)  \right|  \leq \left|  \Delta_{\Xi}(f,x;q,a_q)  \right|  + \frac{1}{\varphi(q)} \sum_{\substack{\chi\pmod q \\ \chi \in \Xi_q, \chi\neq\chi_0}} |S_f(x,\chi)|. \]
Summing this over $q \sim Q$ and using the hypothesis, we deduce that
\[ \sum_{q \sim Q} \left|  \Delta(f,x;q,a_q)  \right|  \leq \sum_{\substack{\psi \in \Xi \\ \psi\neq 1}} \sum_{\substack{r_{\psi} \mid q \sim Q \\ \chi\pmod{q}\text{ induced by }\psi}} \frac{|S_f(x,\chi)|}{\varphi(q)} + O\left(\frac{\Psi(x,y)}{(\log x)^B}\right).  \]
It suffices to show that, for each fixed $\psi\pmod{r} \in \Xi$ with $r>1$, we have
\begin{equation}\label{eq:UsingSW0} 
\sum_{\substack{r \mid q \sim Q \\ \chi\pmod{q}\text{ induced by }\psi}} \frac{|S_f(x,\chi)|}{\varphi(q)} \ll \frac{\Psi(x,y)}{(\log x)^{D}}. 
\end{equation}
The conclusion then follows since $|\Xi| \ll (\log x)^C$ and $D \geq B+C$. If $\chi\pmod{q}$ is induced by $\psi\pmod{r}$, then there is a multiplicative function $h$ supported only on powers of primes which divide $q$ but not $r$, such that $h * f\overline{\psi} = f\overline{\chi}$. Note that $h \in \mathcal C$ since $f \in \mathcal C$, and in particular $h$ is $1$-bounded. It follows that
\[ |S_f(x,\chi)| = \left|\sum_{m \leq x} h(m) S_f(x/m,\psi)\right| \leq \sum_{\substack{m \leq x \\ p\mid m \Rightarrow p\mid q, p\nmid r}} |S_f(x/m,\psi)|. \]
Since $f$ satisfies the $D$-Siegel-Walfisz criterion, we have
\[ \frac{S_f(x/m,\psi)}{\varphi(r)} = \frac{1}{\varphi(r)} \sum_{a\pmod r} \overline{\psi}(a) \Delta(f, x/m; r,a) \ll \frac{\Psi(x/m,y)}{(\log (x/m))^D}. \]
Using the bound $\Psi(x/m,y) \ll m^{-\alpha}\Psi(x,y)$ where $\alpha = \alpha(x,y) \geq \ee + o(1)$, we may bound the left hand side of~\eqref{eq:UsingSW0} by
\begin{equation}\label{eq:UsingSW1}
\Psi(x,y) \sum_{r\mid q \sim Q} \frac{\varphi(r)}{\varphi(q)}  \sum_{\substack{m \leq x \\ p\mid m\Rightarrow p\mid q,p\nmid r}} \frac{1}{m^{\alpha}(\log (x/m))^D}.  
\end{equation}
To analyze the inner sum over $m$, we break it into two pieces depending on whether $m \leq x^{1/2}$ or $m > x^{1/2}$:
\[ \sum_{\substack{m \leq x \\ p\mid m\Rightarrow p\mid q,p\nmid r}} \frac{1}{m^{\alpha}(\log (x/m))^D} \ll \frac{1}{(\log x)^D} \sum_{\substack{m \leq x^{1/2} \\ p\mid m\Rightarrow p\mid q,p\nmid r}} \frac{1}{m^{\alpha}} + \sum_{\substack{x^{1/2} < m \leq x \\ p\mid m\Rightarrow p\mid q,p\nmid r}} \frac{1}{m^{\alpha}}. \]
To deal with the sum over $x^{1/2} < m \leq x$, note that the  number of $m \leq x$ with $p\vert m \Rightarrow p\vert q$ is maximized when $q$ is the product of primes up to $\sim \log x$, in which case the number of such $m$ is $x^{o(1)}$. Thus the sum over $x^{1/2} < m \leq x$ is $O(x^{-\alpha/2+o(1)})$, and their overall contribution to~\eqref{eq:UsingSW1} is acceptable. To deal with the sum over $m \leq x^{1/2}$, note that
\[ \sum_{\substack{m \leq x^{1/2} \\ p\mid m\Rightarrow p\mid q,p\nmid r}} \frac{1}{m^{\alpha}} \leq \sum_{\substack{m \leq x^{1/2} \\ m\mid q, (m,r)=1}} \frac{\mu^2(m)}{\varphi_{\alpha}(m)}, \]
where $\varphi_{\alpha}(m) = \prod_{p \mid m}(p^{\alpha}-1)$. Thus their overall contribution to~\eqref{eq:UsingSW1} is
\[ \ll \frac{\Psi(x,y)}{(\log x)^D} \sum_{(m,r)=1} \frac{\mu^2(m)}{\varphi_{\alpha}(m)}  \sum_{mr\mid q\sim Q} \frac{\varphi(r)}{\varphi(q)} \ll \frac{\Psi(x,y)}{(\log x)^D} \sum_m \frac{\mu^2(m)}{\varphi(m) \varphi_{\alpha}(m)}  \ll \frac{\Psi(x,y)}{(\log x)^D}, \]
since the infinite sum over $m$ converges. This establishes~\eqref{eq:UsingSW0} and completes the proof of the lemma.
\end{proof}

To deduce the second part of Theorem~\ref{MainTheorem}, we apply Proposition~\ref{Using SW} with $a_q\equiv a_1\overline{a_2} \pmod q$   for each dyadic interval with $Q\leq x^{3/5-\varepsilon}$. It is applicable since
 we assume that the Siegel-Walfisz criterion holds for $f$ with exponent $D\geq A +(6A+38)$.
 Summing up over all dyadic intervals, 
 the second part of the Theorem follows (with $A$ replaced by $A-1$).
\end{proof}

\begin{remark}\label{rem:y-range}
The lower bound required for $y$ in the hypothesis~\eqref{y-range} comes precisely from~\eqref{eq:Dupperbound}. In fact,  one can still deduce Theorem~\ref{MainTheorem} even if we just had
\[ D \leq \min(R, x^c) \]
instead. To see this, we follow the arguments above but now use Corollary~\ref{cor:exceptional-good-bound-variant} to get
\[ \sum_{\psi\pmod{q} \in \Xi} \frac{1}{q^{1/2}} \ll (\log x)^{6A+38}. \]
We cannot directly apply Proposition~\ref{Using SW} now, but if we let $\mathcal E$ be the set of $\psi\pmod{r_{\psi}} \in \Xi$ with $r_{\psi} > (\log x)^{14A+78}$, then removing $\mathcal E$ from $\Xi$ induces an error of at most
\[ \sum_{q \leq x} \frac{1}{\varphi(q)} \sum_{\chi \in \mathcal E_q} |S_f(x,\chi)|. \]
As $|S_f(x,\chi) |\ll \Psi_q(x,y) \ll (\varphi(q)/q) \Psi(x,y)$   the above is
\[
\ll  \Psi(x,y)  \sum_{ \psi  \in   \mathcal E}   \sum_{\substack{q\leq x \\ r_\psi|q}}  \frac 1{q}  \ll  \Psi(x,y)\log x  \sum_{ \psi  \in   \mathcal E}  \frac 1{r_\psi} \ll \frac{\Psi(x,y)}{(\log x)^A},
\]
since
\[
\sum_{ \psi  \in   \mathcal E}  \frac 1{r_\psi}  \leq (\log x)^{-7A-39} \sum_{\psi \in \Xi} \frac{1}{r_{\psi}^{1/2}}  \ll (\log x)^{-A-1}.
\]
Thus $|\Xi \setminus \mathcal E| \leq (\log x)^{28A + 156}$, and  we may apply Proposition~\ref{Using SW} with $\Xi$ replaced by $\Xi \setminus \mathcal E$ to conclude the proof.

Thus if one is able to prove Proposition~\ref{Errorfaps1} with $R = x^c$  in the full range 
\begin{equation}\label{y-range-hyp}
x^{\delta} > y \geq (\log x)^C,
\end{equation}
then one can deduce Theorem~\ref{MainTheorem} with~\eqref{y-range} replaced by~\eqref{y-range-hyp}, for some $C$ sufficiently large in terms of $c$. 
\end{remark}

\section{A large sieve inequality supported on smooth numbers}

In this section we prove a large sieve inequality for sequences supported on smooth numbers, a result which may be of independent interest.

\begin{theorem}[Large sieve for smooth numbers]\label{thm:large-sieve-smooth}
There exists $C,c>0$ such that the following statement holds. Let $(\log x)^{C} \leq y \leq x$ be large, and
\[ Q = \min(y^c, \exp(c\log x/\log\log x)). \]
For any sequence $\{a_n\}$ we have
\[ \sum_{q \leq Q} \sum^*_{\chi\pmod q} \left| \sum_{\substack{n \leq x \\ P(n) \leq y}} a_n \chi(n) \right|^2 \ll \Psi(x,y) \cdot \sum_{\substack{n \leq x \\ P(n) \leq y}} |a_n|^2. \]
\end{theorem}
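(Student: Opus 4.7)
My plan is to prove the smooth large sieve by combining duality with a bilinear decomposition based on smooth-number factorisation, reducing ultimately to the classical Montgomery--Vaughan multiplicative large sieve. By the standard duality principle, the stated inequality is equivalent to
\[
\sum_{\substack{n\le x\\P(n)\le y}}\Big|\sum_{q\le Q}\sum^{*}_{\chi\bmod q}c_{q,\chi}\chi(n)\Big|^{2}\ll\Psi(x,y)\sum_{q,\chi}|c_{q,\chi}|^{2},
\]
so I would establish the direct form, using duality only as a check.

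The key input is the following greedy factorisation: for a parameter $M\in[1,x]$ to be chosen, every $y$-smooth $n>M$ has a unique smallest divisor $a=a(n)>M$, and since two consecutive divisors of a $y$-smooth integer differ by a factor of at most $y$, one has $M<a\le My$, with $a$ itself $y$-smooth. Writing $n=a(n)b$ with $b\le x/M$, I would split $S(\chi)=\sum_{n\le x}a_{n}\chi(n)$ into pieces supported on $n\le M$ and $n>M$. For the second piece, substituting the factorisation, applying Cauchy--Schwarz on the $a$-variable (whose support has cardinality at most $\Psi(My,y)-\Psi(M,y)$), and invoking the classical multiplicative large sieve on the inner sum over $b$ (of length at most $x/M$), yields the recursion
\[
\mathcal T(x)\ll\Psi(My,y)\bigl(x/M+Q^{2}\bigr)\sum_{n}|a_{n}|^{2}+\mathcal T(M),
\]
where $\mathcal T(z)$ denotes the analogous sum with $n$ restricted to $\le z$. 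The crucial point is that the uniqueness of $a(n)$ collapses the diagonal exactly, with no extra divisor losses: $\sum_{a}\sum_{b}\mathbf{1}_{a=a(ab)}|a_{ab}|^{2}=\sum_{n}|a_{n}|^{2}$.

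I would then iterate this recursion along a geometric sequence of thresholds $M_{0}=x>M_{1}>\cdots>M_{k}$, stopping when $M_{k}$ is small enough that the trivial bound $(M_{k}+Q^{2})\sum|a_{n}|^{2}$ is already $\ll\Psi(x,y)\sum|a_{n}|^{2}$. Each term $\Psi(M_{i}y,y)\cdot M_{i-1}/M_{i}$ is controlled using the Hildebrand/de~Bruijn--Tenenbaum approximation $\Psi(z,y)/z\sim\rho(\log z/\log y)$ which is valid in the regime $y\ge(\log x)^{C}$, so the logarithmic decay of $\rho$ can be played against the factor $M_{i-1}/M_{i}$ to make each summand individually $\ll\Psi(x,y)$. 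The hypothesis $Q\le y^{c}$ keeps the $Q^{2}$-contribution $\Psi(M_{i}y,y)Q^{2}$ subdominant at each stage, while the second hypothesis $Q\le\exp(c\log x/\log\log x)$ absorbs the accumulation of $Q^{2}$-losses over the $O(\log x/\log y)$ iteration levels. The main obstacle I anticipate is precisely this bookkeeping: arranging the sequence $(M_{i})$ so that the recursion telescopes cleanly to $\Psi(x,y)$, since a single application of the factorisation loses a factor of roughly $y$ compared to the target, and all of this slack must be recovered through the decay of $\rho$ together with the upper bound on $Q$.
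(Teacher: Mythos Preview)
Your approach does not work, and the obstruction is exactly at the point you flagged as the main obstacle. The claim that each summand $\Psi(M_i y,y)\cdot M_{i-1}/M_i$ can be made $\ll\Psi(x,y)$ is false already at the top level of the iteration. Write the first threshold as $M=x/t$ with $t>1$. If $t\ge y$ then $a(n)\in(M,My]$ with $My\le x$, and since $\Psi(z,y)/z$ is (up to constants) non-increasing in $z$, one has
\[
\Psi(My,y)\cdot\frac{x}{M}\ \ge\ \frac{\Psi(x,y)}{x}\cdot My\cdot\frac{x}{M}\ =\ y\,\Psi(x,y).
\]
If $1<t<y$ then $a\in(M,x]$, so $|\{a\}|\le\Psi(x,y)-\Psi(x/t,y)\asymp(1-t^{-\alpha})\Psi(x,y)$, while the classical large sieve on the $b$-sum contributes $t+Q^{2}$; the product is $\gg Q^{2}\,\Psi(x,y)$ once $t$ is bounded away from $1$, and for $t\to1^{+}$ you have made no progress on $\mathcal T(M)$. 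In either regime the first step already overshoots the target by a factor of at least $\min(y,Q^{2})$, and no subsequent iteration can repair this: the Cauchy--Schwarz step on $a$ discards all oscillation in the $a$-variable, leaving only the cardinality $|\{a\}|$, and the saving you need is precisely the oscillation you threw away. The decay of $\rho$ is already fully encoded in the ratio $\Psi(x,y)/x$; it does not furnish a second, independent saving to play against the Cauchy--Schwarz loss.

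The paper's proof is of a fundamentally different nature. After dualising and expanding the square, one must bound $\sum_{n\le x,\,P(n)\le y}\chi(n)$ for each non-principal $\chi$ of conductor at most $Q^{2}$. This is done analytically: a result of Harper (Proposition~\ref{prop:zerofree-to-bound}) shows that this character sum is small whenever $L(s,\chi)$ has a suitable zero-free region near $s=1$, and a log-free zero-density estimate (Proposition~\ref{prop:log-zerofree}) bounds the number of $\chi$ for which such a region fails. The two constraints $Q\le y^{c}$ and $Q\le\exp(c\log x/\log\log x)$ in the statement arise precisely from the ranges in which these zero-density inputs are effective. No purely combinatorial substitute for this analytic input is currently known; indeed the paper remarks that extending the range to $Q=\Psi(x,y)^{1/2}$ remains open.
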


The upper bound is sharp up to a constant, as may be seen by taking each $a_n=1$, so that the $\chi=1$ term on the left-hand side equals $\Psi(x,y)^2$, the size of the right-hand side. This result has the advantage over the traditional large sieve inequality  that the sequence $\{a_n\}$ is supported on a sparse set (when $y = x^{o(1)}$), but the disadvantage that 
this inequality holds in a much smaller range for $q$ than the usual   $q\ll x^{1/2}$. It may well be that Theorem \ref{thm:large-sieve-smooth}  holds with $Q = \Psi(x,y)^{1/2}$.

\subsection{Zero-density estimates}

To prove Theorem~\ref{thm:large-sieve-smooth}, we will use the following two  consequences of deep zero-density results in the literature. The first is a bound for character sums over smooth numbers assuming a suitable zero-free region for the associated $L$-function (see Section 3 of~\cite{Har}).

\begin{proposition}\label{prop:zerofree-to-bound}
There is a small positive constant $\delta > 0$ and a large positive constant $\kappa > 0$ such that the following statement holds. Let $(\log x)^{1.1} \leq y \leq x$ be large. Let $\chi\pmod q$ be a non-principal character with $q \leq x$ and conductor $r := \operatorname{cond}(\chi) \leq x^\delta$. If $L(s,\chi)$ has no zeros in the region
\begin{equation}\label{0freeregion}
\text{\rm Re}(s) > 1-\varepsilon, \ \ |\text{\rm Im}(s)| \leq T,  
\end{equation} 
where the parameters $\ee,T$ satisfy
\begin{equation}\label{eq:hyp-epsilon-H}
\frac \kappa{\log y} < \varepsilon \leq \frac{\alpha(x,y)}2, \ \ y^{0.9\varepsilon} (\log x)^2 \leq T \leq x^\delta,
\end{equation} 
and moreover
\begin{equation}\label{eq:hyp-epsilon-H-2} 
\emph{either}\ \  y \geq (Tr)^\kappa \ \ \emph{or}\ \ \ee \geq 40\log\log (qyT)/\log y. 
\end{equation}
Then  
\[ \left|\sum_{\substack{n \leq x \\ P(n) \leq y}} \chi(n) \right| \ll \Psi(x,y) \sqrt{(\log x)(\log y)} (x^{-0.3\varepsilon} \log T + T^{-0.02}). \]
\end{proposition}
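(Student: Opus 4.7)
The plan is to adapt the saddle-point / Perron approach of Hildebrand--Tenenbaum for~$\Psi(x,y)$ to twisted sums and exploit the zero-free region on a shifted contour. Let $\alpha = \alpha(x,y)$ denote the usual saddle point (satisfying $\sum_{p\leq y}(\log p)p^{-\alpha}=\log x$), and write $L(s,\chi;y) := \prod_{p\leq y}(1-\chi(p)p^{-s})^{-1}$. A truncated Perron-type identity gives
\[ \sum_{\substack{n\leq x \\ P(n)\leq y}}\chi(n) = \frac{1}{2\pi i}\int_{\alpha-iT}^{\alpha+iT} L(s,\chi;y)\,\frac{x^{s}}{s}\,ds + O\bigl(\Psi(x,y)\sqrt{(\log x)(\log y)}\,T^{-0.02}\bigr), \]
the truncation error accounting for the $T^{-0.02}$ term in the claimed bound; the exponent $0.02$ is an artefact of standard estimates on the ratio $|L(\alpha+it,\chi_0;y)|/L(\alpha,\chi_0;y)$ in the smooth-number saddle-point regime.

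I would then shift the contour to $\text{Re}(s) = \alpha - 0.3\varepsilon$. Since $L(s,\chi;y)$ is a finite Euler product, holomorphic in the half-plane $\text{Re}(s) > 0$, no residues are picked up. The core task is to bound $|L(s,\chi;y)|$ on the new contour via the decomposition
\[ \log L(s,\chi;y) = \log L(s,\chi) - \sum_{p > y}\log(1-\chi(p)p^{-s})^{-1} + O(1), \]
controlling the tail over $p>y$ by partial summation, and estimating $\log L(s,\chi)$ through a Borel--Carath\'eodory / Landau-type argument inside~\eqref{0freeregion}: the absence of zeros in a disc of radius comparable to $\varepsilon$ around $s$ converts a crude upper bound for $\text{Re}\,\log L(\cdot,\chi)$ on a slightly larger disc into a two-sided bound of the right order at the centre. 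Comparing with the saddle-point evaluation for $\chi_0$, which produces the model size $\Psi(x,y)\sqrt{(\log x)(\log y)}$, yields the claimed contribution $\Psi(x,y)\sqrt{(\log x)(\log y)}\cdot x^{-0.3\varepsilon}\log T$ from the shifted vertical segment; the horizontal connectors contribute at the same order thanks to $T \geq y^{0.9\varepsilon}(\log x)^{2}$.

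The main obstacle is uniformity in the parameters $q, r, T, \varepsilon$. The Borel--Carath\'eodory step loses factors polynomial in $\log(qyT)$, which must be absorbed either by the decay $x^{0.3\varepsilon}$ (viable exactly when $\varepsilon \geq 40\log\log(qyT)/\log y$, the second alternative in~\eqref{eq:hyp-epsilon-H-2}) or by a Vinogradov-type improvement of the zero-free region in the $t$-aspect available when $y \geq (Tr)^\kappa$, the first alternative. The lower bound $\varepsilon > \kappa/\log y$ in~\eqref{eq:hyp-epsilon-H} is forced by the need for the Landau estimate to be non-trivial, while the upper bound $\varepsilon \leq \alpha(x,y)/2$ keeps the shifted contour in the range where the saddle-point asymptotics for $L(s,\chi_0;y)$ remain valid. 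Matching these constraints with the smooth-number saddle-point calibration, in particular ensuring that the tail $\sum_{p>y}$ does not dominate when $\varepsilon$ is close to its lower threshold, is the main bookkeeping burden; once done, summing the contributions of the shifted vertical contour, horizontal segments, and Perron truncation delivers the stated bound.
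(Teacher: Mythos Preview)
The paper does not prove this proposition: it is quoted from Section~3 of Harper~\cite{Har} (see the sentence introducing the statement), so there is no in-paper argument to compare your sketch against. Your outline---Perron truncation at height~$T$, contour shift to $\text{Re}(s)=\alpha-0.3\varepsilon$, and a bound on $|L(s,\chi;y)|$ along the new line by comparing with $L(s,\chi)$ inside the zero-free region~\eqref{0freeregion}---is in the right spirit and is essentially the shape of Harper's argument (itself in the Hildebrand--Tenenbaum tradition you invoke).

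One step deserves more care than you give it. The decomposition
\[
\log L(s,\chi;y)=\log L(s,\chi)-\sum_{p>y}\log\bigl(1-\chi(p)p^{-s}\bigr)^{-1}+O(1)
\]
is only formal when $\text{Re}(s)<1$: the tail over $p>y$ is not absolutely convergent there, so ``partial summation'' must be fed by non-trivial bounds on $\sum_{y<p\leq z}\chi(p)$ (equivalently on $\psi(z,\chi)$), which themselves come from the zero-free information~\eqref{0freeregion} you are already exploiting for $\log L(s,\chi)$. Making this bootstrap close uniformly in $q,r,T,\varepsilon$ is the real content of the result, and it is where the two alternatives in~\eqref{eq:hyp-epsilon-H-2} enter in genuinely different ways, not merely as bookkeeping for logarithmic losses. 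For a rigorous proof at the stated level of uniformity, following~\cite{Har} directly is the intended and cleanest route.
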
  
  
We also need the following log-free zero-density estimate by Huxley and Jutila, which can be found in Section 2 of~\cite{Har}:

\begin{proposition}\label{prop:log-zerofree}
Let $\varepsilon \in [0,1/2]$, $T \geq 1$ and $Q \geq 1$. Then the function $G_Q(s) = \prod_{q\leq Q} \prod^*_{\chi\pmod q} L(s,\chi)$ has $\ll (Q^2T)^{\frac 52\varepsilon}$ zeros $s$, counted with multiplicity, inside the region \eqref{0freeregion}.
\end{proposition}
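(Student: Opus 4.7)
The plan is to follow the classical mollifier-plus-large-sieve strategy of Montgomery, as refined by Huxley and Jutila; the statement is entirely standard and appears in \cite[\S 2]{Har} and in the original papers of Huxley-Jutila, but a proof sketch runs as follows.

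First, for each primitive character $\chi\pmod q$ with $q\leq Q$, introduce the Selberg mollifier
\[
M_X(s,\chi) := \sum_{n\leq X}\mu(n)\chi(n) n^{-s},
\]
where $X$ is a small power of $(Q^2 T)^{\varepsilon}$ to be optimized. The identity
\[
L(s,\chi)M_X(s,\chi) = 1 + \sum_{n>X}c_n(X)\chi(n) n^{-s}
\]
combined with a contour shift truncating the tail at some $N = (Q^2T)^{O(1)}$ shows that at any zero $\rho = \beta+i\gamma$ of $L(\cdot,\chi)$ with $\beta\geq 1-\varepsilon$ and $|\gamma|\leq T$ the Dirichlet polynomial
\[
P_N(s,\chi) := \sum_{X<n\leq N}c_n(X)\chi(n) n^{-s}
\]
satisfies $|P_N(\rho,\chi)| \gg 1$. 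This is the standard detection device: to every zero in the rectangle we attach a concrete Dirichlet polynomial in $\chi$ whose value at some point of the rectangle is bounded below.

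Next one counts such pairs $(\chi,\rho)$ by invoking Gallagher's hybrid form of the multiplicative large sieve,
\[
\sum_{q\leq Q}\sum_{\chi\pmod q}^{*}\int_{-T}^{T}\Bigl|\sum_{n\leq N}b_n\chi(n) n^{-it}\Bigr|^{2} dt \ll (Q^2 T + N)\sum_n |b_n|^2,
\]
applied not to $|P_N|^2$ but to a $2k$-th power. One splits the set of zeros according to whether the mollifier value $M_X(\rho,\chi)$ is small (handled by log-free $L^{2k}$-bounds for the $\mu$-mollifier, via Vinogradov-style mean-value estimates) or the tail polynomial dominates (handled by the large sieve on the $4k$-th moment). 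Balancing $X$ and the moment exponent $k\asymp 1/\varepsilon$ produces the exponent $\tfrac{5}{2}\varepsilon$.

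The main obstacle, and the whole point of the Huxley-Jutila refinement over Montgomery's original zero-density theorem, is producing a bound that is genuinely \emph{log-free}. A direct second-moment estimate would introduce an unwanted factor $(\log Q^2T)^{O(1)}$ which would spoil downstream applications such as Proposition~\ref{prop:zerofree-to-bound} and ultimately the control of the exceptional set $\Xi$ in Theorem~\ref{MainTheorem}. Removing these logs requires higher-moment Dirichlet polynomial bounds for a $\mu$-coefficient mollifier, together with a delicate Hölder splitting of the zero set; the technical bookkeeping is by now classical, so for the detailed execution we simply refer to \cite[\S 2]{Har} and the papers of Huxley and Jutila.
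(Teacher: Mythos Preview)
The paper does not actually prove this proposition at all: it is stated as a black-box citation, introduced by ``the following log-free zero-density estimate by Huxley and Jutila, which can be found in Section~2 of~\cite{Har}'', and no argument is given. Your sketch therefore goes well beyond what the paper itself provides; it is a reasonable high-level outline of the classical mollifier/large-sieve approach of Huxley--Jutila, and referring to \cite{Har} for the details is exactly what the paper does.
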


\subsection{Proof of Theorem~\ref{thm:large-sieve-smooth}}
  
It suffices to prove its dual form:

\begin{proposition}
There exist $C,c>0$ such that the following statement holds. Let $(\log x)^{C} \leq y \leq x$ be large, and
\[ Q = \min(y^c, \exp(c\log x/\log\log x)). \] 
For any sequence $\{b_{\chi}\}$ we have
\[ \sum_{\substack{n \leq x \\ P(n) \leq y}} \left| \sum_{q \leq Q} \sum^*_{\chi\pmod q} b_{\chi} \chi(n) \right|^2 \ll \Psi(x,y) \cdot \sum_{q \leq Q} \sum^*_{\chi \pmod q} |b_{\chi}|^2. \]
\end{proposition}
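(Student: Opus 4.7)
The plan is to expand the square on the left-hand side, isolate the diagonal contribution (which matches the right-hand side exactly), and control the off-diagonal cross terms via the character sum bounds of Propositions~\ref{prop:zerofree-to-bound} and~\ref{prop:log-zerofree}. Expansion gives
\[
\text{LHS} = \sum_{q_1, q_2 \leq Q} \ssum{\chi_1 \pmod{q_1} \\ \chi_2 \pmod{q_2}}^{*} b_{\chi_1}\overline{b_{\chi_2}}\, T(\chi_1, \chi_2), \quad T(\chi_1, \chi_2) := \ssum{n \leq x \\ P(n)\leq y} \chi_1(n)\overline{\chi_2(n)}.
\]
The diagonal $\chi_1 = \chi_2$ contributes $\sum_{q,\chi^*} |b_\chi|^2\, \Psi_q(x,y) \leq \Psi(x,y)\sum_{q,\chi^*} |b_\chi|^2$, which is the target bound. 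By $|b_{\chi_1}\overline{b_{\chi_2}}| \leq \tfrac12(|b_{\chi_1}|^2 + |b_{\chi_2}|^2)$ and the symmetry $T(\chi_1,\chi_2) = \overline{T(\chi_2,\chi_1)}$, it suffices to prove, uniformly in primitive $\chi_1 \pmod{q_1}$ with $q_1 \leq Q$,
\[
\Sigma(\chi_1) := \sum_{q_2 \leq Q} \ssum{\chi_2 \pmod{q_2} \\ \chi_2 \neq \chi_1}^{*} |T(\chi_1, \chi_2)| \ll \Psi(x,y).
\]

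When $\chi_1 \neq \chi_2$, the product $\chi_1\overline{\chi_2}$ is induced by a unique primitive \emph{non-principal} character $\psi_{12}$ of conductor $r \leq q_1q_2 \leq Q^2$, and for fixed $\chi_1$ the map $\chi_2 \mapsto \psi_{12}$ is injective. M\"obius inversion of $\mathbf{1}_{(n,q_1q_2)=1}$ gives
\[
T(\chi_1, \chi_2) = \ssum{d \mid q_1 q_2 \\ \mu^2(d)=1,\, (d,r)=1} \mu(d) \psi_{12}(d)\, S(\psi_{12}, x/d), \quad S(\psi, z) := \ssum{m \leq z \\ P(m)\leq y} \psi(m).
\]
I split primitive characters $\psi$ into \emph{good} (meaning $L(s,\psi)$ has no zero in $\text{Re}(s) > 1-\varepsilon$, $|\text{Im}(s)| \leq T$) and \emph{bad} otherwise, with $\varepsilon = \kappa_0/\log y$ for $\kappa_0 > \kappa$ and $T = (\log x)^B$. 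For good $\psi_{12}$, Proposition~\ref{prop:zerofree-to-bound} applied to each $S(\psi_{12}, x/d)$, together with $\Psi(x/d,y) \ll d^{-\alpha}\Psi(x,y)$ from~\eqref{Psi growth} and $\sum_{d\mid q_1 q_2} d^{-\alpha} \leq \tau(q_1q_2) = Q^{o(1)}$, yields $|T(\chi_1,\chi_2)| \ll Q^{o(1)}\Psi(x,y)\,\delta_0$ with $\delta_0 := \sqrt{\log x \log y}\bigl(x^{-0.3\varepsilon}\log T + T^{-0.02}\bigr)$. For bad $\psi_{12}$, the trivial bound $|T(\chi_1,\chi_2)| \leq \Psi(x,y)$ suffices, while Proposition~\ref{prop:log-zerofree} applied with modulus range $\leq Q^2$ bounds the number of bad primitive characters by $(Q^4 T)^{5\varepsilon/2}$.

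Thus $\Sigma(\chi_1) \ll \Psi(x,y)$ reduces to the two numerical inequalities $Q^{2+o(1)}\delta_0 \ll 1$ and $(Q^4 T)^{5\varepsilon/2} \ll 1$. Taking $Q = y^c$ and $y \geq (\log x)^C$, the first reduces to a lower bound of the form $B \geq 100\,cC + O(1)$ (so $T^{-0.02}$ defeats $Q^2\sqrt{\log x\log y}$), and the second to $\varepsilon\log(Q^4 T) = \kappa_0(4c + B/C) = O(1)$. The technical hypothesis~\eqref{eq:hyp-epsilon-H-2} is verified through its first alternative $y \geq (Tr)^\kappa$, which amounts to $C \geq B\kappa + O(1)$. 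Choosing constants in the order $\kappa,\kappa_0$ (fixed by the cited propositions), then $c$ small, then $B$ moderate, then $C$ large, all requirements are met simultaneously, completing the proof.

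The main obstacle is this three-way parameter tuning: $T$ must be large enough relative to $Q^2$ for the good estimate to be summable over all $\chi_2$, yet small enough relative to $1/\varepsilon$ to keep the zero-density count bounded, while~\eqref{eq:hyp-epsilon-H-2} forces $\log y$ to dominate $B\kappa$. This tension accounts precisely for the theorem's restrictions $y \geq (\log x)^C$ with $C$ large and $Q \leq y^c$ with $c$ small.
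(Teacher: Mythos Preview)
Your overall architecture — expand the square, separate the diagonal, reduce the off-diagonal to a bound on $\Sigma(\chi_1)$, then feed the primitive inducing character $\psi_{12}$ into the pair (Proposition~\ref{prop:zerofree-to-bound}, Proposition~\ref{prop:log-zerofree}) — matches the paper's. However, the single good/bad split with \emph{fixed} $\varepsilon$ and $T$ cannot reach the stated range of $Q$, and your parameter check hides this.

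Your two requirements are (i) $Q^{2+o(1)}\delta_0 \ll 1$ for the good part, and (ii) $(Q^4T)^{5\varepsilon/2}=O(1)$ for the bad part. Requirement (ii) forces $\varepsilon \ll 1/\log Q$, while the $x^{-0.3\varepsilon}$ term in (i) forces $\varepsilon \gg \log Q/\log x$; together these give $(\log Q)^2 \ll \log x$, i.e.\ $Q\leq \exp(O(\sqrt{\log x}))$, well short of $\exp(c\log x/\log\log x)$. Your numerical claim ``$B\geq 100cC+O(1)$ makes $T^{-0.02}$ defeat $Q^2\sqrt{\log x\log y}$'' implicitly takes $Q$ to be a fixed power of $\log x$, which is only the case when $y=(\log x)^{O(1)}$. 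For, say, $y=\exp((\log x)^{0.9})$ one has $Q^2=y^{2c}=\exp(2c(\log x)^{0.9})$, which no choice of $T=(\log x)^B$ can cancel; and choosing $T$ to scale with $Q$ does not help, since then (ii) fails again via the $x^{-0.3\varepsilon}$ constraint.

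The paper circumvents this by a dyadic decomposition in the \emph{size} of the smooth character sum: for $\eta\in(0,1/2]$ it sets $\Xi^*(\eta)$ to be the primitive characters with $|\sum_{n}\chi(n)|\asymp \eta\Psi(x,y)$, chooses $\varepsilon$ depending on $\eta$ (roughly $\varepsilon\asymp (\log\eta^{-1})/\log x$), and proves $|\Xi^*(\eta)|\ll\eta^{-1/2}$. The contribution at level $\eta$ is then $\ll \eta\,|\Xi^*(\eta)|\,\Psi(x,y)\sum_\chi|b_\chi|^2 \ll \eta^{1/2}\Psi(x,y)\sum_\chi|b_\chi|^2$, which sums over dyadic $\eta$. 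This adaptive choice of $\varepsilon$ is exactly what allows $Q$ to reach $\min(y^c,\exp(c\log x/\log\log x))$, and is the missing idea in your argument.
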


\begin{proof}
The left hand side can be bounded by
\begin{equation} \label{eq: LHS}
 \sum_{\chi_1} \sum_{\chi_2} b_{\chi_1} \overline{b_{\chi_2}} \sum_{\substack{n \leq x \\ P(n) \leq y}} (\chi_1\overline{\chi_2})(n). 
 \end{equation}
Thus we need to understand character sums over smooth numbers. The contribution from the diagonal terms with $\chi_1 = \chi_2$ is clearly acceptable, and thus we focus on non-diagonal terms. For $\eta \in (0,1/2]$, define $\Xi(\eta)$ to be the set of all non-principal characters $\chi\pmod q$ with $q \leq Q^2$, such that
\[ \eta \Psi(x,y) < \left|\sum_{\substack{n \leq x \\ P(n) \leq y}} \chi(n) \right| \leq 2\eta \Psi(x,y). \]
Furthermore, define $\Xi^*(\eta)$ to be the set of primitive characters which induce a character in $\Xi(\eta)$. The contribution to \eqref{eq: LHS} from those $\chi_1,\chi_2$ with $\chi_1\overline{\chi_2} \in \Xi(\eta)$ is  
\begin{align*}
& \leq  2 \eta \Psi(x,y) \sum_{\substack{\chi_1,\chi_2 \\ \chi_1\overline{\chi_2} \in \Xi(\eta)}} |b_{\chi_1}b_{\chi_2}| \leq 4\eta \Psi(x,y) \sum_{\substack{\chi_1,\chi_2 \\ \chi_1\overline{\chi_2} \in \Xi(\eta)}} |b_{\chi_1}|^2 \\
& \leq  4\eta \Psi(x,y) \sum_{\psi \in \Xi^*(\eta)} \sum_{\chi_1} |b_{\chi_1}|^2 \sum_{\substack{\chi_2 \\ \chi_1\overline{\chi_2}\text{ induced by }\psi}} 1.
\end{align*}
We claim that for a given $\chi_1\pmod{q_1}$ and a given primitive $\psi \in \Xi^*(\eta)$, there is at most one primitive character $\chi_2$ such that $\chi_1\overline{\chi_2}$ is induced by $\psi$. To see this, suppose that there are two primitive characters $\chi_2\pmod{q_2}$ and $\chi_2'\pmod{q_2'}$ such that both $\chi_1\overline{\chi_2} := \chi$ and $\chi_1\overline{\chi_2'} := \chi'$ are induced by $\psi$. It suffices to show that $\chi_2(n) = \chi_2'(n)$ whenever $(n,q_2q_2')=1$, since this would imply that $\chi_2\chi_2'$ is the principal character, and thus $\chi_2=\chi_2'$ as they are both primitive. 

If $(n,q_2q_2')=1$, then we may find an integer $k$ such that $(n+kq_2q_2',q_1q_2q_2')=1$. Thus $\chi_1(n+kq_2q_2') \neq 0$ and $\chi(n+kq_2q_2') = \chi'(n+kq_2q_2')$. It follows that
\[ \chi_2(n) = \chi_2(n+kq_2q_2') = (\overline{\chi\chi_1})(n+kq_2q_2') = (\overline{\chi'\chi_1})(n+kq_2q_2') = \chi_2'(n+kq_2q_2') = \chi_2'(n). \]
This completes the proof of the claim.

It follows that the contribution to~\eqref{eq: LHS} from those $\chi_1,\chi_2$ with $\chi_1\overline{\chi_2} \in \Xi(\eta)$ is 
\[ \ll \eta \Psi(x,y) |\Xi^*(\eta)| \sum_{\chi} |b_{\chi}|^2. \]
We will show that $|\Xi^*(\eta)| \ll \eta^{-1/2}$ so that the result follows by summing over $\eta$ dyadically. 
We may assume that $\eta \geq Q^{-8}$, as the bound follows for smaller $\eta$ from  the trivial bound $|\Xi^*(\eta)| \leq Q^4$. 

We now use Proposition~\ref{prop:zerofree-to-bound} to show that if $\chi \in \Xi(\eta)$ then $L(s,\chi)$ has a zero in the region \eqref{0freeregion} for suitable values of $\varepsilon$ and $T$. This would imply that $L(s,\psi)$ has zero in the region~\eqref{0freeregion} for any $\psi \in \Xi^*(\eta)$.

For the purpose of contradiction, let's assume that $\chi \in \Xi(\eta)$ and $L(s,\chi)$ has no zero in  \eqref{0freeregion} with $T = Q^{500}$. We wish to verify the hypotheses in \eqref{eq:hyp-epsilon-H} and~\eqref{eq:hyp-epsilon-H-2}. The  upper bound on $T$ in~\eqref{eq:hyp-epsilon-H} follows from the definition of $Q$. Now $r\leq q\leq Q^2$ and so the first alternative of  \eqref{eq:hyp-epsilon-H-2} follows by selecting $c$ so that $502c\kappa\leq 1$. We define 
\[ 
\ee = \max\left(\frac{2\kappa}{\log y}, \frac{12(\log\eta^{-1} + \log\log x)}{\log x}\right) .
\]
Since $\log\eta^{-1} \leq 8\log Q$ and $\log\log x \leq \log Q$, 
we have the upper bound
\[ 
\ee \leq \max\left(\frac{2\kappa}{\log y}, \frac{108\log Q}{\log x}\right) \ll \frac 1 {\log\log x} ,
 \]
so that $\ee = o(1)$ and $y^{\ee} \ll Q^{108}$. The first hypothesis in \eqref{eq:hyp-epsilon-H} follows immediately. Finally, by selecting $C$ so that $cC\geq 2$ we guarantee that $Q\geq (\log x)^2$, so that the lower bound on $T$ in \eqref{eq:hyp-epsilon-H} follows easily.

Now $\ee \geq 12(\log\eta^{-1} + \log\log x)/\log x$ so that $x^{-0.3\ee} \leq (\eta/\log x)^{3.6}$, and therefore
\[ 
\sqrt{(\log x)(\log y)}(x^{-0.3\ee} \log T + T^{-0.02}) \leq \eta^{3.6}(\log x)^{-1.6} + Q^{-10}\log x. 
\]
Now $Q^{-10}\log x = o(Q^{-8}) = o(\eta)$, as $Q\geq (\log x)^2$. Therefore Proposition~\ref{prop:zerofree-to-bound} implies that  
\[ 
\left|\sum_{\substack{n \leq x \\ P(n) \leq y}} \chi(n) \right| \ll o(\eta \Psi(x,y)),
\]
contradicting the definition of $\Xi(\eta)$.

By Proposition~\ref{prop:log-zerofree} we now deduce (remembering that characters in $\Xi^*(\eta)$ have conductors at most $Q^2$) that
\[ |\Xi^*(\eta)| \ll (Q^4 T)^{\frac 52 \ee} = Q^{1260\ee}  \ll \eta^{-1/2}, \]
which completes the proof.
\end{proof}

\begin{remark}\label{rmk:LS-GRH}
Assuming the Riemann hypothesis for Dirichlet $L$-functions, by Proposition~1 of~\cite{Har}, we have the bound~$\sum_{n\leq x, P(n)\leq y} \chi(n) = O(x^{1-c})$ uniformly for~$\chi$ non-principal~$\mod{q}$,~$q\leq x^c$, $(\log x)^C \leq y \leq x^c$, for some absolute constants~$C, c>0$. This implies an upper bound
$$ \ll \Big(\Psi(x, y) + Q^2x^{1-c}\Big) \sum_{\chi} |b_\chi|^2 $$
for~\eqref{eq: LHS}, for all~$Q\leq x^c$, and Theorem~\ref{thm:large-sieve-smooth} would hold with~$Q=x^{c/3}$ and~$C$ large enough.
\end{remark}

\subsection{A variant of Theorem~\ref{thm:large-sieve-smooth}}

We may extend the range to $Q = x^c$ in Theorem~\ref{thm:large-sieve-smooth} unconditionally if we insert some weights that reduce the effects of characters with large conductor.

\begin{theorem}\label{thm:large-sieve-smooth-variant}
There exists $C,c>0$ such that the following statement holds. Let $(\log x)^{C} \leq y \leq x$ be large, and let $Q = x^c$.
For any sequence $\{a_n\}$ we have
\[ \sum_{q \leq Q} \frac{1}{q^{1/2}} \sum^*_{\chi\pmod q} \left| \sum_{\substack{n \leq x \\ P(n) \leq y}} a_n \chi(n) \right|^2 \ll \Psi(x,y) \cdot \sum_{\substack{n \leq x \\ P(n) \leq y}} |a_n|^2. \]
\end{theorem}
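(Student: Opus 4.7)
The strategy is to follow the dualization and zero-density argument of Theorem~\ref{thm:large-sieve-smooth}, adapted to accommodate the enlarged range $Q=x^c$ and the new $1/q^{1/2}$ weight. By the adjoint operator identity applied to the map $(a_n)\mapsto (q^{-1/4}\sum_n a_n\chi(n))_{q,\chi}$, the desired inequality is equivalent to its dual,
\[
\sum_{\substack{n\leq x\\ P(n)\leq y}}\Big|\sum_{q\leq Q}\sum^*_{\chi\pmod q}q^{-1/4}c_{q,\chi}\chi(n)\Big|^2\ll \Psi(x,y)\sum_{q,\chi}|c_{q,\chi}|^2.
\]
Expanding the square, the diagonal contribution is trivially $\leq\Psi(x,y)\sum|c|^2$. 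For the off-diagonal, I would partition pairs $((q_1,\chi_1),(q_2,\chi_2))$ dyadically according to the size $\eta$ for which $|\sum_{n\leq x,P(n)\leq y}\chi_1\overline{\chi_2}(n)|\in(\eta\Psi,2\eta\Psi]$, and let $\Xi^*(\eta)$ be the set of primitive characters $\psi$ of conductor $\leq Q^2=x^{2c}$ inducing such characters. For fixed $\psi$, the map $\chi_1\mapsto\chi_2=(\chi_1\overline{\psi})^*$ is injective, and Cauchy--Schwarz gives
\[
\sum_{\psi\in\Xi^*(\eta)}\sum_{\chi_1}q_1^{-1/4}q_2^{-1/4}|c_{q_1,\chi_1}c_{q_2,\chi_2}|\leq |\Xi^*(\eta)|\sum_{q,\chi}q^{-1/2}|c_{q,\chi}|^2\leq|\Xi^*(\eta)|\sum|c|^2,
\]
so that the weights $q^{-1/4}$ are absorbed cleanly. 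Summing dyadically over $\eta$, the off-diagonal contribution becomes $\ll\Psi(x,y)\sum_\eta\eta|\Xi^*(\eta)|\cdot\sum|c|^2$.

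The remaining, and principal, task is to show $|\Xi^*(\eta)|\ll\eta^{-1/2}$ in the range $\eta\geq Q^{-8}$ (the trivial bound $|\Xi^*(\eta)|\leq Q^4$ covering $\eta<Q^{-8}$). Following the proof of Theorem~\ref{thm:large-sieve-smooth}, I would argue by contradiction: were $L(s,\chi)$ to have no zero in \eqref{0freeregion} with $T=Q^{500}=x^{500c}$, Proposition~\ref{prop:zerofree-to-bound} would force the associated character sum to be $o(\eta\Psi)$, contradicting $\chi\in\Xi^*(\eta)$. Since the first alternative $y\geq(Tr)^\kappa$ of \eqref{eq:hyp-epsilon-H-2} fails for $y$ as small as $(\log x)^C$, I would verify instead the second alternative $\varepsilon\geq 40\log\log(qyT)/\log y$, setting
\[
\varepsilon=\max\Big(\frac{2\kappa}{\log y},\ \frac{40\log\log(qyT)}{\log y},\ \frac{12(\log\eta^{-1}+\log\log x)}{\log x}\Big).
\]
Proposition~\ref{prop:log-zerofree} applied to $G_{Q^2}$ then yields $|\Xi^*(\eta)|\ll(Q^4T)^{5\varepsilon/2}=x^{1260c\varepsilon}$, and the target bound $\ll\eta^{-1/2}$ follows by analysing separately which of the three terms in the maximum defining $\varepsilon$ dominates, provided $c$ is taken sufficiently small (on the order of $1/30000$) and $C$ is taken sufficiently large relative to $1/c$.

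The main obstacle will be ensuring uniform validity of $|\Xi^*(\eta)|\ll\eta^{-1/2}$ across the transition regime where the middle term $40/C+o(1)$ dominates $\varepsilon$. There the count $x^{50400c/C}$ must be compared against $\eta^{-1/2}$, and the auxiliary hypothesis $T^{-0.02}=o(\eta)$, equivalently $\eta\geq x^{-10c}$, must simultaneously hold; this forces a delicate calibration between $c$ and $C$, and a sub-division of $\eta$-ranges more intricate than in Theorem~\ref{thm:large-sieve-smooth}. Once this is settled, the off-diagonal is bounded by $\Psi(x,y)\sum_\eta\eta^{1/2}\cdot\sum|c|^2\ll\Psi(x,y)\sum|c|^2$, completing the proof of the dual inequality and hence of Theorem~\ref{thm:large-sieve-smooth-variant}.
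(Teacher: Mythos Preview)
Your dualization and the dyadic decomposition over $\eta$ are fine, but there is a genuine gap at the Cauchy--Schwarz step. When you bound
\[
\sum_{\psi\in\Xi^*(\eta)}\sum_{\chi_1}q_1^{-1/4}q_2^{-1/4}|c_{\chi_1}c_{\chi_2}|\leq |\Xi^*(\eta)|\sum_{q,\chi}q^{-1/2}|c_{q,\chi}|^2,
\]
you discard the information that the conductor $r$ of $\psi$ divides $q_1q_2$, and you are then forced to prove the unweighted bound $|\Xi^*(\eta)|\ll\eta^{-1/2}$. This bound is out of reach in the stated range for $y$. Take $y=(\log x)^C$ and $\eta=1/2$: the second alternative of~\eqref{eq:hyp-epsilon-H-2} forces $\varepsilon\geq 40\log\log(qyT)/\log y\asymp 40/C$, a fixed positive constant, and Proposition~\ref{prop:log-zerofree} then yields only $|\Xi^*(\eta)|\ll (Q^4T)^{5\varepsilon/2}\geq x^{400c/C}$, a positive power of $x$, whereas $\eta^{-1/2}=O(1)$. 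No choice of constants $c,C>0$ repairs this; your ``delicate calibration'' does not exist. Nor can you switch to the first alternative $y\geq (Tr)^{\kappa}$, since conductors range up to $Q^2=x^{2c}$ while $y$ is only polylogarithmic.

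The paper circumvents this by not throwing away the conductor. From $r\mid q_1q_2$ one has $(q_1q_2)^{-1/4}\leq r^{-1/8}(q_1q_2)^{-1/8}$, and AM--GM gives
\[
\frac{|b_{\chi_1}b_{\chi_2}|}{(q_1q_2)^{1/4}}\leq \frac{1}{r^{1/8}}\Big(\frac{|b_{\chi_1}|^2}{q_1^{1/4}}+\frac{|b_{\chi_2}|^2}{q_2^{1/4}}\Big),
\]
so that one needs only the \emph{weighted} estimate $\sum_{\psi\pmod r\in\Xi^*(\eta)} r^{-1/8}\ll\eta^{-1/2}$, equivalently $|\Xi^*(\eta,R)|\ll \eta^{-1/2}R^{1/9}$ after a dyadic split in the conductor $R$. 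This extra $R^{1/9}$ is precisely what absorbs the positive power of $x$ coming from $\varepsilon\asymp 40/C$ when $R$ is large. For $R\leq(\log x)^{10}$ the first alternative of~\eqref{eq:hyp-epsilon-H-2} is available after choosing $T=(\eta^{-1}\log x)^{60}$ (not $Q^{500}$, which would be too large), so no floor on $\varepsilon$ is imposed there. Without this two-step device---extracting the $r^{-1/8}$ weight and splitting over $R$ with an $\eta$-dependent $T$---the argument does not close.
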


\begin{proof}
The proof is similar as the proof of Theorem~\ref{thm:large-sieve-smooth}. We begin by passing to its dual form, so that we need to prove that
\[ \sum_{\substack{n \leq x \\ P(n) \leq y}} \left| \sum_{q \leq Q} \frac{1}{q^{1/4}} \sum^*_{\chi\pmod q} b_{\chi} \chi(n) \right|^2 \ll \Psi(x,y) \cdot \sum_{q \leq Q} \frac{1}{q^{1/4}} \sum^*_{\chi \pmod q} |b_{\chi}|^2 \]
for any sequence $\{b_{\chi}\}$, where the summation is over all primitive characters $\chi\pmod{q}$ with $q \leq Q$. Expanding the square, we can bound the left hand side above by
\begin{equation}\label{eq:variant-1} 
\sum_{\chi_1\pmod{q_1}} \sum_{\chi_2\pmod{q_2}} \frac{|b_{\chi_1} b_{\chi_2}|}{(q_1q_2)^{1/4}} \left|\sum_{\substack{n \leq x \\ P(n) \leq y}} (\chi_1\overline{\chi_2})(n)\right|.
\end{equation}
For $\eta \in (0,1/2]$, define $\Xi(\eta)$ and $\Xi^*(\eta)$ as in the proof of Theorem~\ref{thm:large-sieve-smooth}. If $\chi_1\overline{\chi_2} \in \Xi(\eta)$ so that it is induced by some $\psi\pmod{r} \in \Xi^*(\eta)$, then
\[ \frac{|b_{\chi_1}b_{\chi_2}|}{(q_1q_2)^{1/4}} \leq \frac{1}{r^{1/8}}\left(\frac{|b_{\chi_1}|^2}{q_1^{1/4}} + \frac{|b_{\chi_2}|^2}{q_2^{1/4}}\right). \]
Thus the contribution to~\eqref{eq:variant-1} from those $\chi_1,\chi_2$ with $\chi_1\overline{\chi_2} \in \Xi(\eta)$ is
\begin{align*} 
& \ll \eta \Psi(x,y) \sum_{\psi\pmod{r} \in \Xi^*(\eta)} \frac{1}{r^{1/8}} \sum_{\chi_1\pmod{q_1}} \frac{|b_{\chi_1}|^2}{q_1^{1/4}} \sum_{\substack{\chi_2 \\ \chi_1\overline{\chi_2}\text{ induced by }\psi}} 1 \\
& \ll \eta \Psi(x,y) \left(\sum_{\psi\pmod{r} \in \Xi^*(\eta)} \frac{1}{r^{1/8}}\right) \left(\sum_{\chi\pmod{q}} \frac{|b_{\chi}|^2}{q^{1/4}}\right). 
\end{align*}
Hence it suffices to show that
\[ \sum_{\psi\pmod{r} \in \Xi^*(\eta)} \frac{1}{r^{1/8}} \ll \eta^{-1/2}, \]
and then the conclusion follows after dyadically summing over $\eta$. For $1 \leq R \leq Q^2$, let $\Xi(\eta, R)$ and $\Xi^*(\eta, R)$ be the set of characters in $\Xi(\eta)$ and $\Xi^*(\eta)$ with conductors $\sim R$, respectively. Thus it suffices to show that
\[ |\Xi^*(\eta, R)| \ll \eta^{-1/2} R^{1/9}, \]
for each $\eta \in (0,1/2]$ and $1 \leq R \leq Q^2$. We may assume that $\eta \geq R^{-4}$, since otherwise the trivial bound $|\Xi^*(\eta,R)| \ll R^2$ suffices. From now on fix such $\eta$ and $R$.

We now use Proposition~\ref{prop:zerofree-to-bound} to show that if $\chi \in \Xi(\eta, R)$ then $L(s,\chi)$ has a zero in the region \eqref{0freeregion} for suitable values of $\varepsilon$ and $T$. This would imply that $L(s,\psi)$ has zero in the region~\eqref{0freeregion} for any $\psi \in \Xi^*(\eta, R)$.

Set $T = (\eta^{-1}\log x)^{60}$. If $R \leq (\log x)^{10}$ (say), then the first alternative of~\eqref{eq:hyp-epsilon-H-2} holds because
\[ (2TR)^{\kappa} \leq (\log x)^{2500\kappa} \leq y, \]
provided that $C \geq 2500\kappa$. In this case we will set $\ee$ to be exactly the same as before:
\[ \ee := \max\left(\frac{2\kappa}{\log y}, \frac{12(\log\eta^{-1}+\log\log x)}{\log x}\right), \ \ \text{if } R \leq (\log x)^{10}.  \]
Then~\eqref{eq:hyp-epsilon-H} can be easily verified, and the contrapositive of Proposition~\ref{prop:zerofree-to-bound} implies that $L(s,\chi)$ has a zero in the region~\eqref{0freeregion} whenever $\chi \in \Xi(\eta, R)$. Hence by Proposition~\ref{prop:log-zerofree} we have
\[ |\Xi^*(\eta, R)| \ll (R^2T)^{\frac 52 \ee} \ll \eta^{-150\ee} R^{5\ee} (\log x)^{150\ee} \ll \eta^{-1/2} R^{1/9}, \]
since $\ee \ll 1/\log\log x$ in this case.

It remains to consider the case when $(\log x)^{10} \leq R \leq Q^2$. We set $T$ as above, and we will now set
\[ \ee := \max\left(\frac{2\kappa}{\log y}, \frac{12(\log\eta^{-1}+\log\log x)}{\log x}, \frac{50\log\log x}{\log y}\right), \]
so that the second alternative in~\eqref{eq:hyp-epsilon-H-2} is satisfied. One can still easily verify~\eqref{eq:hyp-epsilon-H}, and thus Propositions~\ref{prop:zerofree-to-bound} and~\ref{prop:log-zerofree} combine to give
\[ |\Xi^*(\eta, R)| \ll (R^2T)^{\frac 52\ee} \ll \eta^{-150\ee} R^{5\ee} (\log x)^{150\ee} \ll \eta^{-1/2} R^{1/9},  \]
since $\ee \leq 1/300$ (by choosing $C$ large enough) and $\log x \leq R^{1/10}$. This completes the proof.
\end{proof}

Examining the proof, one easily sees that the weight $1/q^{1/2}$ can be replaced by $1/q^{\sigma}$ for any constant $\sigma>0$, and the statement remains true provided that $C$ is large enough in terms of $\sigma$. For our purposes, any exponent strictly smaller than $1$ suffices.

\section{Bounding the number of exceptional characters}

\begin{corollary}\label{cor:exceptional-good-bound}
There exist $C,c>0$ such that the following statement holds.
Let $(\log x)^{C} \leq y \leq x^{1/4}$ be large. 
Let $\{a_n\}$ be an arbitrary $1$-bounded sequence. For $B \geq 0$, let $\Xi(B)$ be the set of primitive characters $\chi\pmod{r}$ with $r \leq Q$ where
\[ Q := \min(y^c, \exp(c \log x/\log\log x), \]
such that there exists $x^{1/4} < X \leq x$ for which
\[ \left| \sum_{\substack{n \leq X \\ P(n) \leq y}} a_n \chi(n) \right| \geq \frac{\Psi(X,y)}{(u\log u)^4 (\log x)^B}. \]
Then $|\Xi(B)| \ll (\log x)^{3B+13}$. 
\end{corollary}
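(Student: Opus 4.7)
The plan is to apply the large sieve for smooth numbers (Theorem~\ref{thm:large-sieve-smooth}) to the characters in $\Xi(B)$, the only twist being that I must first linearize the cutoff $X_\chi$, which depends on $\chi$. First I would dyadically decompose $[x^{1/4}, x]$ into $O(\log x)$ intervals $[Y, 2Y]$ and use pigeonhole to fix a single $Y$ for which $\Xi_Y := \{\chi\in\Xi(B) : X_\chi \in [Y,2Y]\}$ satisfies $|\Xi_Y|\gg |\Xi(B)|/\log x$. Then I would further partition $[Y,2Y]$ into $K := c_0 (u\log u)^4 (\log x)^B$ subintervals $I_k=[X_k,X_k+L]$ of equal length $L=Y/K$, and pigeonhole a second time to fix some $k$ with $|\Xi_k|:=|\{\chi\in\Xi_Y : X_\chi\in I_k\}|\gg |\Xi_Y|/K$.

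The key step is replacing $X_\chi$ by the common cutoff $X_k$. Writing $S_\chi(X):=\sum_{n\leq X,\,P(n)\leq y}a_n\chi(n)$ and using $|a_n|\leq 1$, the triangle inequality yields $|S_\chi(X_\chi)-S_\chi(X_k)|\leq \Psi(X_k+L,y)-\Psi(X_k,y)$ for each $\chi\in\Xi_k$. A short-interval estimate for $\Psi$ (Hildebrand's theorem, comfortably applicable here since $L=Y/K\gg Y/\mathrm{poly}(\log x)$ and $y\geq (\log x)^C$) gives
\[
\Psi(X_k+L,y)-\Psi(X_k,y)\ll \frac{L}{Y}\Psi(Y,y)=\frac{\Psi(Y,y)}{K}.
\]
Choosing $c_0$ large enough that this is at most half of the defining threshold $\Psi(X_\chi,y)/((u\log u)^4(\log x)^B)\geq \Psi(Y,y)/((u\log u)^4(\log x)^B)$, I obtain
\[
|S_\chi(X_k)|\gg \frac{\Psi(Y,y)}{(u\log u)^4(\log x)^B}\qquad(\chi\in\Xi_k).
\]

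With a common cutoff $X_k$, I now apply Theorem~\ref{thm:large-sieve-smooth} to the $1$-bounded sequence $(a_n\mathbf{1}_{n\leq X_k})$, obtaining
\[
\sum_{\chi\in\Xi_k}|S_\chi(X_k)|^2 \leq \sum_{q\leq Q}\sum^*_{\chi\pmod q}|S_\chi(X_k)|^2 \ll \Psi(X_k,y)\cdot\Psi(X_k,y)\ll \Psi(Y,y)^2,
\]
so $|\Xi_k|\ll (u\log u)^8(\log x)^{2B}$. Combining the two pigeonhole losses gives
\[
|\Xi(B)|\ll (\log x)\cdot K\cdot (u\log u)^8(\log x)^{2B}=(u\log u)^{12}(\log x)^{3B+1}.
\]
Finally, since $y\geq (\log x)^C$ forces $u\leq (\log x)/(C\log\log x)$ and hence $u\log u\leq \log x$, we have $(u\log u)^{12}\leq (\log x)^{12}$, yielding the claimed bound $|\Xi(B)|\ll (\log x)^{3B+13}$.

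The main obstacle is the short-interval estimate for $\Psi$ in the linearization step: it is the reason one needs $y$ to be a sufficiently large power of $\log x$, since Hildebrand-type results require the smoothness parameter to be large relative to the relative length $L/Y\sim 1/\mathrm{poly}(\log x)$. Apart from this input, the argument is purely a dyadic pigeonhole combined with a direct application of the smooth-number large sieve (Theorem~\ref{thm:large-sieve-smooth}).
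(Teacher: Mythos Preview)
Your proof is correct and follows essentially the same approach as the paper: partition $[x^{1/4},x]$ into $\asymp T\log x$ pieces of relative length $\asymp 1/T$ with $T=(u\log u)^4(\log x)^B$, use Hildebrand's short-interval bound for $\Psi$ to replace each $X_\chi$ by a common endpoint, pigeonhole, and apply Theorem~\ref{thm:large-sieve-smooth}. The only cosmetic difference is that you split the partition into a dyadic step followed by an equal-length step, whereas the paper uses a single geometric partition; the bookkeeping and final bound $(u\log u)^{12}(\log x)^{3B+1}\ll(\log x)^{3B+13}$ are identical.
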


\begin{proof} Let $T=(u\log u)^4 (\log x)^B$. We begin by partitioning the interval $[x^{1/4},x]$ using a sequence $x^{1/4} = X_0 < X_1 < \cdots < X_{J-1} < X_J = x$ with $J \asymp T \log x$, such that $ X_{j+1} - X_j \asymp \varepsilon X_j/T$, 
for some fixed small enough $\ee>0$, for each $0 \leq j < J$. 

For each $\chi \in \Xi(B)$, there exists some $0 \leq j < J$ for which
\[ 
\left|\sum_{n \leq X_j} a_n \chi(n)\right| \geq \frac{\Psi(X_j,y)}{T} - \sum_{\substack{X_j < n \leq X_{j+1} \\ P(n) \leq y}} 1. 
\]
Corollary 2 of \cite{Hil} implies a good  upper bound for smooth numbers in short intervals: For any fixed $\kappa>0$,
\begin{equation} \label{SmoothShorts}
\Psi(x+\frac xT,y)-\Psi(x,y) \ll_\kappa \frac {\Psi(x,y)}T \text{  for  } 1\leq T\leq \min \{ y^\kappa, x\}.
\end{equation}
In our case $u\log u\leq \log x$ so that $T\leq (\log x)^{B+4}$, so the hypothesis here is satisfied, and we therefore have
\[
\sum_{\substack{X_j < n \leq X_{j+1} \\ P(n) \leq y}} 1 = \Psi(X_{j+1},y)-\Psi(X_j,y)\ll \varepsilon \frac {\Psi(X_j,y)}T .
\] 
By choosing $\varepsilon$ sufficiently small we deduce that 
\begin{equation} \label{eq: a_n lower bd}
\left|\sum_{n \leq X_j} a_n \chi(n)\right| \geq \frac{\Psi(X_j,y)}{2T}  . 
 \end{equation}

We deduce that there  exists some $0 \leq j < J$ such that \eqref{eq: a_n lower bd}
holds for at least $|\Xi(B)|/J$ characters $\chi \in \Xi(B)$. Therefore
\[ 
\sum_{\chi \in \Xi(B)} \left|\sum_{\substack{n \leq X_j \\ P(n) \leq y}} a_n \chi(n) \right|^2 \geq \frac{ |\Xi(B)| }J \cdot \frac{\Psi(X_j,y)^2}{4T^2} \gg  \frac{ |\Xi(B)|  \Psi(X_j,y)^2}{ T^3 \log x} 
\]
On the other hand,  Theorem~\ref{thm:large-sieve-smooth} implies that 
\[ 
\sum_{\chi \in \Xi(B)} \left|\sum_{\substack{n \leq X_j \\ P(n) \leq y}} a_n \chi(n) \right|^2  \leq \sum_{r \leq Q} \sum^*_{\chi\pmod{r}} \left|\sum_{\substack{n \leq X_j \\ P(n) \leq y}} a_n \chi(n)\right|^2 \ll \Psi(X_j,y)^2 ,
\]
and therefore $|\Xi(B)| \ll T^3 \log x = (u\log u)^{12} (\log x)^{3B+1} \ll (\log x)^{3B+13}$, as claimed.
\end{proof}

We also record the following variant which gives a weighted count of exceptional characters, but now with the wider range $Q = x^c$.

\begin{corollary}\label{cor:exceptional-good-bound-variant}
There exist $C,c>0$ such that the following statement holds.
Let $(\log x)^{C} \leq y \leq x^{1/4}$ be large. 
Let $\{a_n\}$ be an arbitrary $1$-bounded sequence. For $B \geq 0$, let $\Xi(B)$ be the set of primitive characters $\chi\pmod{r}$ with $r \leq Q := x^c$, such that there exists $x^{1/4} < X \leq x$ for which
\[ \left| \sum_{\substack{n \leq X \\ P(n) \leq y}} a_n \chi(n) \right| \geq \frac{\Psi(X,y)}{(u\log u)^4 (\log x)^B}. \]
Then 
\[ \sum_{\psi\pmod{q} \in \Xi(B)} \frac{1}{q^{1/2}} \ll (\log x)^{3B+13}. \]
\end{corollary}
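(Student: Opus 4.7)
The plan is to parallel the proof of Corollary~\ref{cor:exceptional-good-bound} almost verbatim, with the sole change of invoking the weighted variant Theorem~\ref{thm:large-sieve-smooth-variant} in place of Theorem~\ref{thm:large-sieve-smooth}. The $q^{-1/2}$ weight in Theorem~\ref{thm:large-sieve-smooth-variant} is precisely what permits the wider conductor range $Q = x^c$, and it dovetails naturally with the weighted count we are trying to bound.

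Set $T := (u\log u)^4 (\log x)^B$ and, exactly as in Corollary~\ref{cor:exceptional-good-bound}, partition $[x^{1/4}, x]$ by a sequence $x^{1/4} = X_0 < X_1 < \cdots < X_J = x$ with $J \asymp T\log x$ and $X_{j+1} - X_j \asymp \varepsilon X_j/T$ for a small fixed $\varepsilon>0$. The short-interval bound \eqref{SmoothShorts} for smooth numbers (applicable since $T \leq (\log x)^{B+4} \leq y^\kappa$ in our range) then produces, for every $\chi \in \Xi(B)$, an index $j = j(\chi) \in \{0,\ldots,J-1\}$ such that
\[
\left|\sum_{\substack{n \leq X_j \\ P(n) \leq y}} a_n \chi(n)\right| \geq \frac{\Psi(X_j, y)}{2T}.
\]

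The essential difference from the proof of Corollary~\ref{cor:exceptional-good-bound} is that a naive pigeonhole over $j$ would only bound $|\Xi(B)|$, not the weighted sum we want. Instead I would group characters by $j(\chi)$: for each $j$ write $\Xi_j(B) := \{\chi \in \Xi(B) : j(\chi) = j\}$ and apply Theorem~\ref{thm:large-sieve-smooth-variant} at level $X_j$ with the trivial bound $\sum_{n\leq X_j,\, P(n)\leq y} |a_n|^2 \leq \Psi(X_j, y)$. This yields
\[
\sum_{\chi \in \Xi_j(B)} \frac{1}{q^{1/2}} \cdot \frac{\Psi(X_j,y)^2}{4T^2} \leq \sum_{\chi \in \Xi_j(B)} \frac{1}{q^{1/2}} \left|\sum_{\substack{n \leq X_j \\ P(n) \leq y}} a_n \chi(n)\right|^2 \ll \Psi(X_j, y)^2,
\]
whence $\sum_{\chi \in \Xi_j(B)} q^{-1/2} \ll T^2$. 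Summing over the $J \asymp T\log x$ values of $j$ and using $u\log u \ll \log x$ in the permitted range of $y$ gives
\[
\sum_{\psi\pmod{q} \in \Xi(B)} \frac{1}{q^{1/2}} \leq \sum_{j=0}^{J-1} \sum_{\chi \in \Xi_j(B)} \frac{1}{q^{1/2}} \ll J T^2 \ll T^3 \log x \ll (\log x)^{3B+13},
\]
which is the required bound.

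There is no serious obstacle once Theorem~\ref{thm:large-sieve-smooth-variant} is in hand: the argument is a direct mirror of Corollary~\ref{cor:exceptional-good-bound} in which the pigeonhole step is replaced by a partition of $\Xi(B)$ according to $j(\chi)$, which is the natural way to preserve the $q^{-1/2}$ weighting throughout. The real content was packaged into the weighted large sieve in the previous section; here we merely harvest it.
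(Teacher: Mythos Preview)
Your proof is correct and follows essentially the same approach as the paper: mirror the argument of Corollary~\ref{cor:exceptional-good-bound}, replacing the large sieve by its weighted variant Theorem~\ref{thm:large-sieve-smooth-variant}. The only cosmetic difference is that the paper uses a weighted pigeonhole to select a single index $j$ (so your remark that ``a naive pigeonhole over $j$ would only bound $|\Xi(B)|$'' slightly undersells what pigeonhole can do here), whereas you partition $\Xi(B)$ by $j(\chi)$ and sum the resulting bounds over all $j$; both routes yield $\ll JT^2 \asymp T^3\log x$.
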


\begin{proof}
The proof is the same as above, except that one considers the weighted sum 
\[ \sum_{\chi\pmod{q} \in \Xi(B)} \frac{1}{q^{1/2}} \left|\sum_{\substack{n \leq X_j \\ P(n) \leq y}} a_n \chi(n) \right|^2, \]
and use Theorem~\ref{thm:large-sieve-smooth-variant} instead of Theorem~\ref{thm:large-sieve-smooth} in the last step.
\end{proof}



\begin{thebibliography}{99}


		

 \bibitem{dBT} R. de la Bret\`eche and G. Tenenbaum,
Propri\'et\'es statistiques des entiers friables.  
\emph{Ramanujan J.} {\bf  9} (2005), 13--202. 


 

 
\bibitem{Dr1}  S. Drappeau,   
Th\'eor\`emes de type Fouvry-Iwaniec pour les entiers friables.
    \emph{Compos. Math.}, {\bf 151}  (2015),   828--862.
    

    
 
 
 \bibitem{FT} \'E. Fouvry and G. Tenenbaum,
 Entiers sans grand facteur premier en progressions arithmetiques,
\emph{Proc. London Math. Soc.}, {\bf  63} (1991), 449--494.


\bibitem{FT2} \'E. Fouvry and G. Tenenbaum,
R\'epartition statistique des entiers sans grand facteur premier dans les progressions arithm\'etiques,
\emph{Proc. London Math. Soc.}, {\bf  72} (1996), 481--514.


 \bibitem{GHS}  A. Granville, A. Harper and K. Soundararajan,
{\em A new proof of Hal\'asz's Theorem, and some consequences.} (preprint)


\bibitem{GSh} A. Granville and X. Shao, {\em Bombieri-Vinogradov for multiplicative functions, and beyond the $x^{1/2}$-barrier.} (preprint)

\bibitem{Har} A. Harper, {\em Bombieri-Vinogradov and Barban-Davenport-Halberstam type theorems for smooth numbers.} (preprint)

\bibitem{Hil} A. Hildebrand,  
Integers free of large prime divisors in short intervals, 
\emph{Quart. J. Math. Oxford}, {\bf 36}, (1985),   57---69.

\bibitem{IwaniecKowalski} H. Iwaniec and E. Kowalski,
\emph{Analytic number theory},
 American Mathematical
  Society Colloquium Publications, {\bf 53}, American Mathematical Society, Providence, RI, 2004.

\bibitem{Rot} K. F. Roth,
On the large sieves of Linnik and R\'{e}nyi, \emph{Mathematika}, {\bf 12}, (1965), 1---9.




 



\end{thebibliography}
\end{document}